\titleformat*{\section}{\large\bfseries}
\titleformat*{\subsection}{\normalsize\bfseries}
\titleformat*{\subsubsection}{\normalsize\bfseries}
\renewcommand{\footnotesize}{\fontsize{8bp}{1em}\selectfont}
\newtheorem{theorem}{Theorem}[section]
\newtheorem{proposition}[theorem]{Proposition}
\newtheorem{definition}[theorem]{Definition}
\newtheorem{example}[theorem]{Example}
\newtheorem{remark}[theorem]{Remark}
\newtheorem{corollary}[theorem]{Corollary}
\newtheorem{openproblem}[theorem]{Open Problem}
\begin{document}

\title{\bf Market share maximizing strategies of CAV fleet operators may cause chaos in our cities}
\author[1]{Grzegorz Jamr\'oz$^*$}
\author[1]{Rafa{\l}  Kucharski}
\author[2]{David Watling}
\affil[1]{Faculty of Mathematics and Computer Science, Jagiellonian University, Kraków, Poland}
\affil[2]{Institute for Transport Studies, University of Leeds, United Kingdom}
\affil[*]{Corresponding author, e-mail: grzegorz.jamroz@uj.edu.pl}
\setcounter{Maxaffil}{0}
\renewcommand\Affilfont{\itshape\small}
\maketitle

\begin{abstract}
We study the dynamics and equilibria of a new kind of routing games, where players -- drivers of future autonomous vehicles -- may switch between individual (HDV) and collective (CAV) routing. In individual routing, just like today, drivers select routes minimizing expected travel costs, whereas in collective routing an operator centrally assigns vehicles to routes. The utility is then the average experienced travel time discounted with individually perceived attractiveness of automated driving. The market share maximising strategy amounts to offering utility greater than for individual routing to as many drivers as possible. Our theoretical contribution consists in developing a rigorous mathematical framework of individualized collective routing and studying algorithms which fleets of CAVs may use for their market-share optimization. We also define bi-level CAV -- HDV equilibria and derive conditions which link the potential marketing behaviour of CAVs to the behavioural profile of the human population. Importantly, our general framework admits further extensions and promises fruitful theoretical and applied further research on the topic. Practically, we find that the fleet operator may often be able to equilibrate at full market share by simply mimicking the choices HDVs would make. In more realistic  heterogenous human population settings, however, we discover that the market-share maximizing fleet controller should use highly variable mixed strategies as a means to attract or retain customers. The reason is that in mixed routing the powerful group player can control which vehicles are routed via congested and uncongested alternatives. The congestion pattern generated by CAVs is, however, not known to HDVs before departure and so HDVs cannot select faster routes and face huge uncertainty whichever alternative they choose. Consequently, mixed market-share maximising fleet strategies resulting in unpredictable day-to-day driving conditions may, alarmingly, become pervasive in our future cities.

\end{abstract}

{\bf Keywords:} autonomous driving, CAV-human interaction, collective routing, traffic assignment, mixed routing strategy, Stackelberg equilibrium, travel time offer profiles

\section{Introduction}
Day-to-day travel times in an equilibrated traffic system are not fixed but stochastic, \cite{Taylor1982,Mahmassani1,WatlingCantarella, EverChanging}, which is due to \cite{Mahmassani1} traffic incidents, special events, work zones, weather, day-to-day demand fluctuations, traffic control devices, or inadequate base capacity. The typical distribution of day-to-day travel times can be efficiently modeled as a log-normal or Burr distribution \cite{Buchel, Susilawati, Taylor} with a well-defined peak and a long positive tail. 
Focusing on the demand-induced variability of travel times, we note that the magnitude of travel time variability in low congestion regime is limited, however it grows significantly when congestion increases \cite{Chen, Mahmassani1}.

The unimodality of distribution of route travel times under fixed Origin-Destination demand levels, however, may no longer hold when there is a group player which purposefully coordinates the routing of a large fleet of vehicles and deliberately induces unbalanced day-to-day distribution of demand on different routes. 
In \cite{Jamroz2024} different behavioural strategies maximizing immediate reward of a fleet controller were discussed and it was hypothesized, based on numerical experiments, that 'malicious' behavioural strategy, consisting in making travel times as large as possible for independent human drivers, might be the one preferred by market-share maximizing fleets of vehicles. Coincidentally, the 'malicious' strategy resulted in large oscillations of the number of drivers and travel times on different routes. In \cite{JamrozIFA} it was shown that, in simple scenarios, Stackelberg equilibria (see e.g. \cite{CorreaStackelberg, StackelbergRoughgarden}) for malicious fleet controller necessarily involve highly-variable mixed strategies made up of two different routing patterns. In this paper, we abandon explicit one-day behavioural objectives and focus on market share maximization in a market with diverse human attitudes towards adoption of CAVs. We discover that in such conditions the high variability of efficient market strategies emerges naturally. More precisely, we study equilibrium scenarios with a fleet of CAVs which explicitly uses mixed (in the game-theoretical sense) Stackelberg routing strategies, where two or more significantly different routing patterns are used with non-zero probabilities, and compare them to equilibrium scenarios where only deterministic Stackelberg routing is allowed. We discover that in scenarios with heterogenous attitudes towards adopotion of CAVs, market maximization can \emph{only} be achieved by using mixed strategies which may result in day-to-day travel time distributions which are bimodal with one peak near free-flow conditions, and one peak in the highly congested regime, Fig. \ref{Fig_traveltime}. 
\begin{figure}
\centering
\label{Fig_traveltime}
\includegraphics[scale=0.5]{"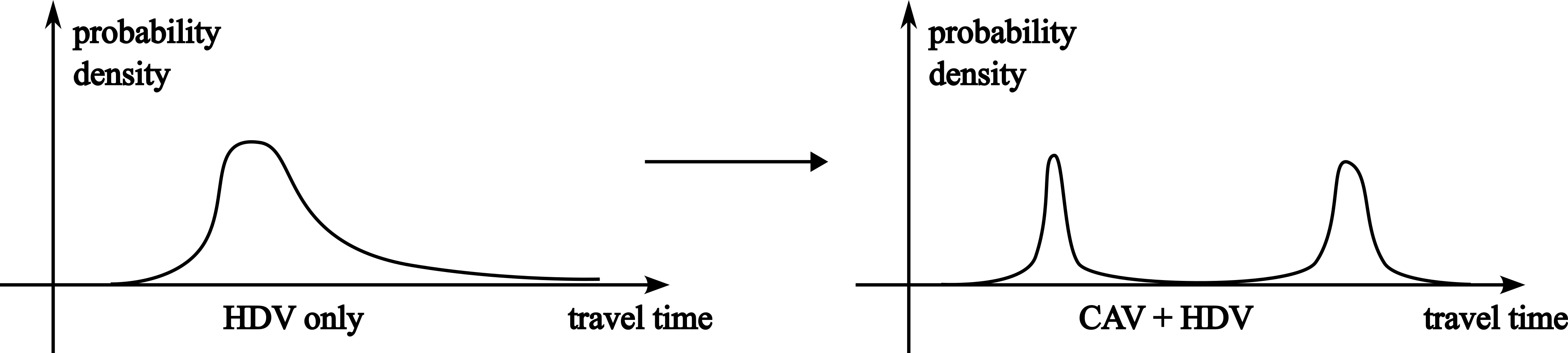"}
\caption{Day-to-day travel time distribution on a given route in typical urban settings (left) and likely travel time distribution when a fleet of CAVs uses probabilistic mixed routing. Note that even if the mean is the same, the $95$th percentile is shifted towards larger travel times which would inconvenience independent drivers who would need to depart significantly earlier in order to arrive on time on most days.}
\end{figure}
Such day-to-day travel time distributions, unknown in the previous transportation research, will result in unreliable networks and high buffer indices (schedule times shifts), seriously degrading the quality of urban traffic. To reach these conclusions, we develop a rigorous mathemical framework on the one hand and present carefully designed, illustrative examples, on the other.

The paper is organised as follows. In Section \ref{Sec_model} we introduce the model and the related equilibria, formulate our research questions and discuss HDV - CAV switching dynamics. 
In Section \ref{Sec_OfferProfiles} we introduce a general framework for considering the mean travel times, a fleet of CAVs can offer individually to drivers, which we call \emph{offer profiles}. In Section \ref{Sec_greedy} we introduce and study theoretically a greedy algorithm which verifies the  feasibility of travel time offer profiles, i.e. whether they can be realized in actual day-to-day asssignment of drivers to routes. Section \ref{Sec_DiscountFactorProfiles} introduces discount factor profiles which quantify heterogenous attitudes towards adoption of CAVs and relates them to optimal offer profiles. Section \ref{Sec_illustrative} presents four scenarios, which illustrate the theoretical framework introduced in previous sections and highlight the disturbing emergent phenomena. In Section \ref{Sec_Discussion} we summarize the results and discuss further research directions and open problems. Finally, in Appendix we provide more general utility-based models and discuss potential practical consequences of mixed routing. We also present results discussing routing every driver via two routes only omitted from the main text. 
\begin{figure}
\centering
\includegraphics[scale=0.36]{"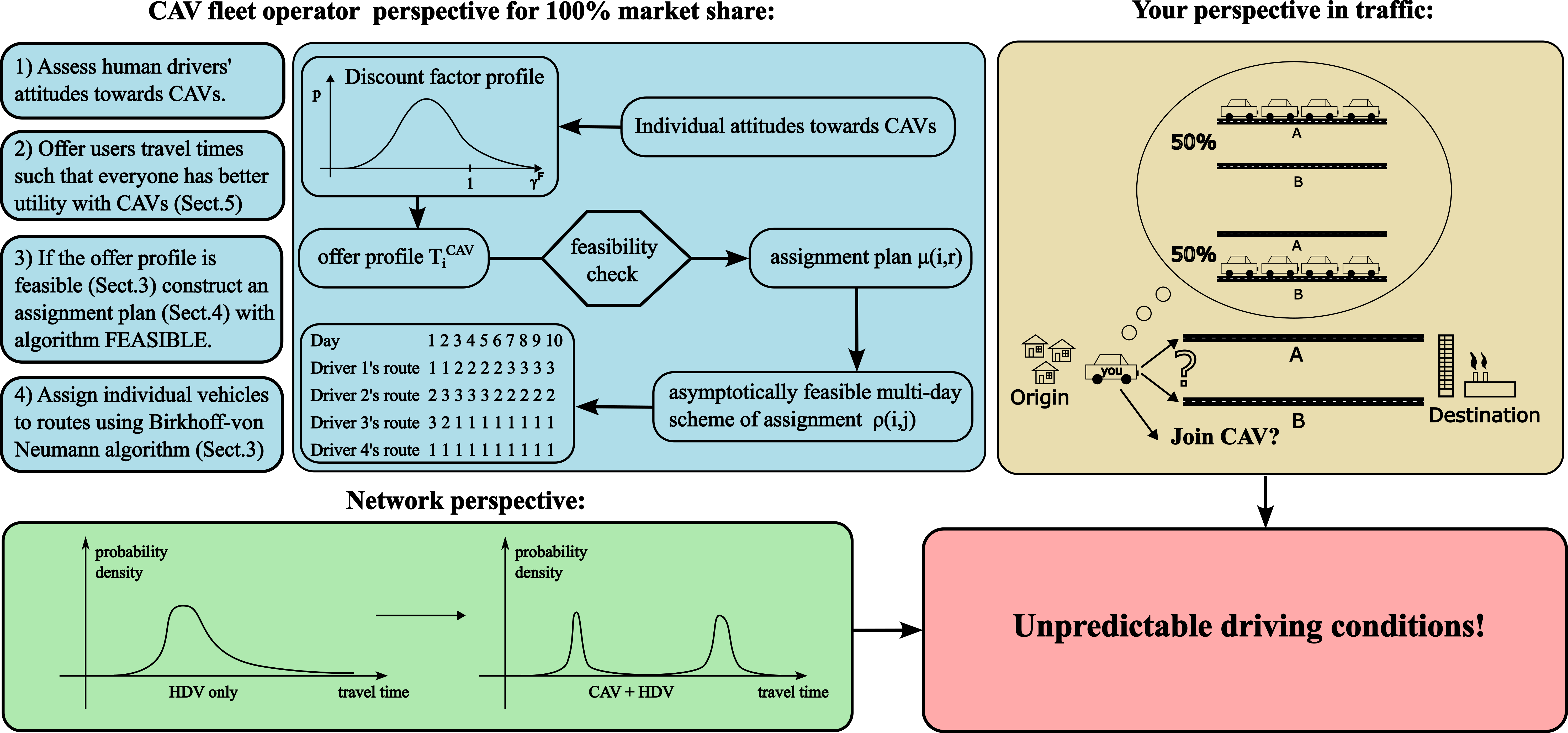"}
\label{Fig_abstr}
\caption{Results of the paper at a glance. CAV fleet operator uses a multi-stage pipeline to decide which routing can maximize the market share. In most realistic cases this results in you as a road user facing the choice: either use an independent HDV and select from routes with enormous uncertainty, or join the fleet and obtain better expected utility. From the network point of view, the fleet of CAVs can generate congestion on routes in a stochastic way resulting in travel times which have two separated peaks. The human perspective and network perspective jointly make up the unpredictability of driving conditions and put human drivers at a disadvantage vs. CAVs, which generate these conditions deliberately to maximize market share.}
\end{figure}

\subsection{Background}

\noindent Interaction between autonomous vehicles and human drivers is a prolific research area that has many interesting aspects such as microscopic driving behaviour \cite{Farah2022, Gora2020}. The interaction between human drivers and coordinated fleets of autonomous vehicles in future transportation systems, however, seems to be understudied. The research related to adoption of AVs, namely, has so far focused mostly on individual factors determining CAV uptake and discussing static levels of adoption in relation to automation level, e.g. \cite{Ardeshiri, Correia, Harrison}, or different routing outcomes \cite{Wang, Jamroz2024}. The dynamic market evolution in systems with mature CAV as a service offer where drivers are free to switch between driving/routing independently and subscribing to a coordinated fleet of autonomously driving and routing vehicles remain unexplored. 

Importantly, the dynamics of human-only (no CAVs) systems can already exhibit complex long run phenomena such as unstable or multiple attractors with research in this direction ranging from consideration of simple differential dynamical systems  to full-scale agent-based or aggregated stochastic processes, e.g. \cite{Smith, Horowitz, SmithWatling, CaCa, SmithEtAl}. Adding autonomous vehicles into such a system adds a further layer of complexity: the autonomous agents may behave selfishly, cooperatively or semi-cooperatively; they may interact differently with human vehicles than they do with other autonomous vehicles; and they may be ‘tuned’ to influence the system evolution in different ways, whether for the benefit or not of the different groups of agents.

Studying complex interactions such as HDV-CAV dynamics, one usually accounts for certain aspects while aggregating the others. In \cite{Bitar2022}, e.g., the authors investigated microscopic driving conflict interactions and made long-term predictions based on the evolution of the system using (evolutionary) game theory. Nevertheless, a comprehensive theory of adoption of CAVs allowing for forecasting long term properties in mature markets with CAVs treated as equal road-users has been lacking. 

In this contribution we initiate research to fill this gap, proposing behavioral models of CAV uptake derived from factors such as perceived efficiency of routing and study their properties, especially equilibria, stable states and long-term convergence, leaving the more dynamical aspects to future research.

\section{Model formulation and research questions}
\label{Sec_model}

We study day-to-day dynamics in a system with one Origin-Destination pair and $R$ parallel routes, Fig. \ref{Fig_OD} as an abstraction of urban traffic system, as well as drivers, indexed by $i$, who every day select/are assigned a route along which they travel. 
Each driver every day chooses between two modes -- HDV and CAV -- based on disutilities (perceived costs) $u_i^{HDV}, u_i^{CAV}$ of each mode. Having chosen HDV, they furthermore choose a route $r \in \{1,\dots,R\}$, along which they travel. Having chosen CAV, they are individually assigned a route by the fleet controller. 
\begin{figure}[h!]
\centering
\includegraphics[scale=0.8]{"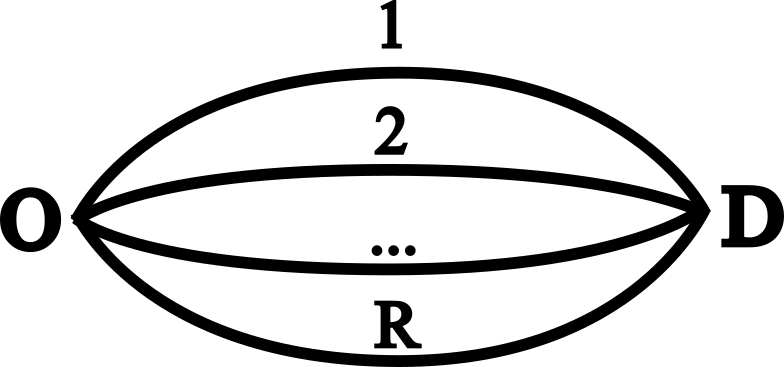"}
\caption{The considered system consists of $R$ independent parallel routes between Origin (O) and Destination(D).}
\label{Fig_OD}
\end{figure}
The travel times along different alternatives are given by macroscopic functions $t_1(\bold{q}), \dots, t_R(\bold{q})$, e.g. BPR, where $\bold{q} = (q_1,\dots,q_R)$ is the flow  vector accounting for the total flow of vehicles via different routes. The {\bf state of the system} on a given day $J$ is thus characterized by:
\begin{itemize}
\item $Mode(i) \in \{HDV, CAV\}$, i.e. every driver's chosen mode -- HDV or CAV, 
\item $r(i) \in \{1,2,\dots, R\}$ -- the route chosen by driver $i$ if $Mode(i) = HDV$ or the route assigned to driver $i$ if $Mode(i) = CAV$,
\item $u_i^{HDV}, u_i^{CAV}$ -- the disutilities (perceived costs) of using $HDV$ or $CAV$, respectively, for every $i$,
\item history of travel times $t_r^j$ on days $j = 1,2,\dots, J$ via routes $r = 1,2,\dots, R$,
\item routing schemes and mean travel times offered by the fleet operator to every driver. 
\end{itemize}
\subsection{Dynamics of the system}
After a given day of travel $J$, every driver updates the disutility $u_i^{HDV}$ using the most recent travel times $t_r^J$ and updates the disutility $u_i^{CAV}$ based on the, updated by the fleet operator, mean travel time and routing pattern offered by the fleet of CAVs. The next day $J+1$, every driver  reconsiders the mode choice and chooses (if mode is HDV) or is assigned (if mode is CAV) one of the routes $1,\dots, R$, see Fig. \ref{Fig_choice}. More elaborate models including e.g. departure time choice and mode specific constants are discussed is Appendix \ref{Sec_General}.

\begin{figure}[h!]
\centering
\includegraphics[scale=0.6]{"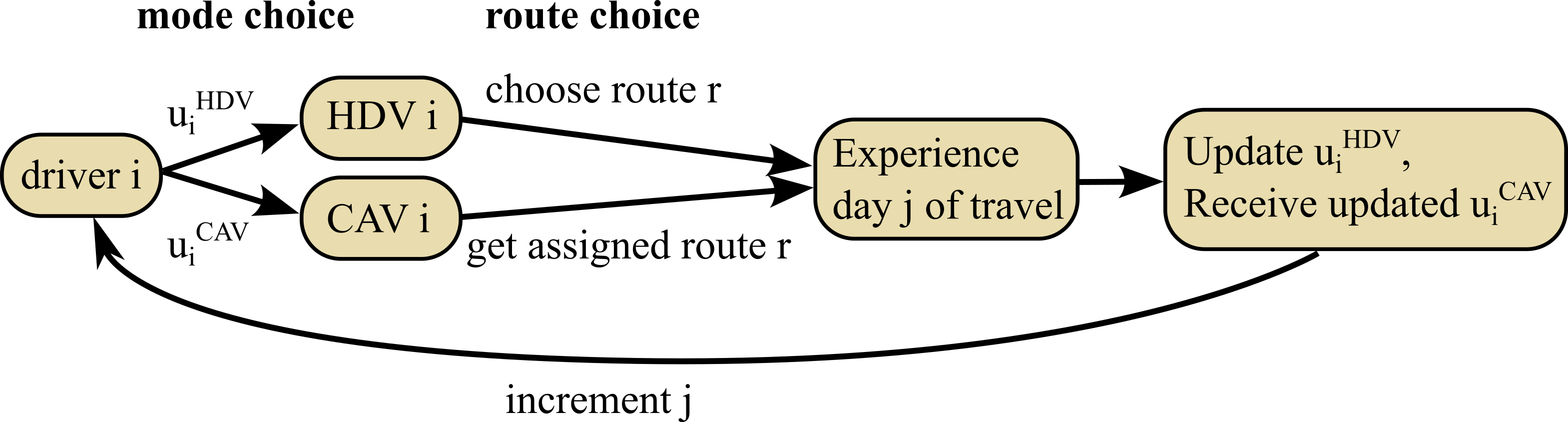"}
\caption{Dynamics of the considered system. Every driver $i$ makes a decision whether to use an HDV or CAV based on comparing the disutilities (perceived costs) of the two options, $u_i^{HDV}$ and $u_i^{CAV}$. If $u_i^{HDV}<u_i^{CAV}$ then they choose HDV. If $u_i^{HDV}>u_i^{CAV}$ then they choose CAV. Otherwise, they stick to the mode from the previous day. Then, on a given day of travel $j$, if HDV is the chosen mode, the driver selects one of the routes $1,\dots, R$ based typically on minimization of expected travel time and drives from Origin to Destination along the selected route. Contrariwise, if CAV is the chosen mode, then driver $i$ is assigned a route $r$ and the autonomous vehicle takes driver $i$ from Origin to Destination. After a finished day of travel, the drivers update their utilities of using an HDV (based typically on which route was the fastest on the past days). The fleet informs driver $i$ of the long-term utility it can offer to driver $i$. Periodically (every day or every several days), driver $i$ reconsiders whether to change the mode. Whether driver $i$ swaps the mode or not, they face the route choice on the following day $j+1$ and this decision - feedback process is looped indefinitely. At dynamic equilibrium the mode choice is fixed for every driver $i$ and no driver is inclined to change it. At nested equilibrium, the mode choices, the route choices of drivers using HDV and routing patterns for every $CAV$ are fixed and not only no driver is inclined to change the mode but also no HDV user is inclined to choose a different route. }
\label{Fig_choice}
\end{figure}
\subsection{User behaviour}
\noindent The {driver behaviour} is characterized by how the disutilities $u_i^{HDV}$ and $u_i^{CAV}$ are set. In this paper, we usually assume that $u_i^{HDV}$ corresponds to the expected travel time  on the fastest route, $t_{min}$, whereas $u_i^{CAV}$ is based on the routing pattern offered by the fleet operator and typically may be proportional to mean long-term offered travel time. Then HDVs $i$ are typically assumed to select the fastest route available, whereas CAVs $i$ are assigned routes by the fleet controller such that, in the long run, the average offered travel times correspond to actually experienced travel times for every fleet member. In this setting, the {\bf control variable} we study is the {\bf individualized routing pattern} of the fleet controller yielding {average travel times offered to drivers $i$}. 

\subsection{Research questions}
\label{Sec_ResQ}
The questions we address in this paper are: 
\begin{itemize}
\item[{\bf (Q1)}] What mean travel time $T_i^{CAV}$ should the fleet controller offer to driver $i$ such that $u_i^{CAV} < u_i^{HDV}$ for as many drivers $i$ as possible (market-share maximization)?
\item[{\bf (Q2)}] Can mean travel times $T_i^{CAV}$ be realized in multi-day traffic for all $i$ simultaneously, i.e. is a given offer profile feasible? 
\item[{\bf (Q3)}] What network conditions result from a given (especially market-share maximizing) feasible travel time offer profile? 
\end{itemize}

\noindent We study {\bf (Q2)} in Sections \ref{Sec_OfferProfiles}-\ref{Sec_greedy}, where we introduce a general framework of individualized offer profiles. In particular, we discuss the conditions under which individual assignment plans yielding travel times $T_i^{CAV}$ exist. In the case of two available routes, these conditions have a simple analytical form, while in the case of many routes, we introduce an algorithm which verifies the feasibility of offer profiles and constructs the corresponding assignment plans. The answer to Question {\bf (Q1)} depends on the attitudes individual drivers have towards collectively routing CAVs and is discussed in Section \ref{Sec_DiscountFactorProfiles}. Illustrative examples accross four scenarios are presented in Section \ref{Sec_illustrative}, answering {\bf (Q3)}. As mentioned before, we generally consider human drivers' (HDV) choice as consisting in selecting route only. Nevertheless, to put our results in a broader context, in Appendix \ref{Sec_General} we discuss also more general models when departure times of time-critical trips can be adjusted in response to changing driving conditions and observe that dramatic schedule shifts are to be expected when CAVs strive to maximize market share.

\subsection{Equilibria}
In this paper we mostly focus on the $100\%$ CAV market share in {\bf Fleet-Human Equilibrium}, see Definition \ref{def_FHE} below, 
where $u_i^{CAV}\le u_i^{HDV}$ for every $i$ and CAV fleet routing patterns are fixed.

We note that in this setting the temporal dynamics of CAV vs. HDV switching is irrelevant, compare Section \ref{Sec_switching}.

\begin{definition}(Fleet-human equilibria)
\label{def_FHE}
A mixed traffic system consisting of one fleet of CAVs and independent human drivers is in 
\begin{itemize}
\item {\bf Dynamic Fleet-Human equilibrium (DFHE)} if no driver has an incentive to switch from HDV to CAV or vice versa.
\item {\bf Nested Fleet-Human Stackelberg equilibrium (NFHE)} if no driver has an incentive to switch from HDV to CAV or vice versa and the system composed of atomic individual (or infinitesimal) human driver players and a group fleet player is in Stackelberg equilibrium at the level of route choice. 
\end{itemize}
\end{definition}

\begin{remark}
The realization of Fleet-Human equilibria depends on the behavioural objectives the players optimize, see Example \ref{ex_fhe} below. 
\end{remark}

\begin{example}[Examples of Fleet-Human equilibria]
\label{ex_fhe}
\begin{enumerate}
\item[i)] HDV-only system with no option to switch to CAV is always in Dynamic FHE. If, additionally, it is in User Equilibrium at the level of route choice then the system is in Nested FHE. 
\item[ii)] A system with $100 \%$ of CAVs where no driver has an incentive to switch is in NFHE if the fleet's sole objective is the maximization of market share.
\item[iii)] If the objective of fleet is bi-level: first maximize market share and then minimize cost then a system with $100 \%$ of CAVs is in NFHE only if it is optimized cost-wise, i.e. the fleet uses the most cost-efficient routing which guarantees $100\%$ market share. In particular the routing considered in Section \ref{ex_3} is not in NFHE as it is not optimized in terms of cost to the fleet. 
\item[iv)] If the market share of CAVs is less than $100\%$ then the system can be in Stackelberg NFHE only if there exists no Stackelberg routing which achieves a higher market share. 
\item[v)] A system consisting of both CAVs and HDVs, which is disequlibrated at the level of route choice (e.g. when HDVs take time to approach equilibrium) can be in DFHE as long as no driver has (and never will have) an incentive to switch mode. 
\end{enumerate}
\end{example}

\begin{openproblem} The structure of fleet-human equilibria is relatively straightforward in the scenarios with $100\%$ market share of CAVs considered in this paper. The landscape and structure of FHE in the general case is open.
\end{openproblem}

\subsection{Switching HDV $\leftrightarrow$ CAV models} 
\label{Sec_switching}
In this paper, with the exception of scenario studied in Section \ref{ex_4}, we consider equilibrium solutions, where every driver is committed to using HDV or CAV and hence the sign of $u_i^{CAV} - u_i^{HDV}$ is fixed for every $i$ and no driver has an incentive to switch. In this section we briefly hint at possible models of the dynamics of switching between HDV and CAV, urging the transportation research community to conduct behavioural studies which would clarify which model best reflects reality. Let us first discuss {\bf what it means to be a member of the fleet of CAVs}. There are two main variants:
\begin{enumerate}
\item[i)] To become part of the fleet of CAVs one has to buy a car. Then switching $HDV \to CAV$ is potentially difficult and is a major decision. 
\item[ii)] All cars possess the automated driving mode. Using an HDV means switching off the autopilot. Using a CAV means switching on the AV mode and connecting to the fleet operator which provides, every day, routing instructions. 
\end{enumerate}
In this paper, we generally assume that the market is mature and switching between HDV and CAV fleet literally means 'flicking a switch' in your car. Clearly, there exist many variants such as 'subscription period', where the fleet can be joined only for e.g. a minimum period of half a year etc. We leave studying these scenarios to further research. We also assume that AV mode {\bf always} comes with collective routing. This does not have to be the case and other scenarios may involve drivers using AV mode without collective routing. These scenarios are also left for further research. 

To conclude, our {\bf basic model of switching $HDV \leftrightarrow CAV$ in a mature market} reads as follows: Every day, every driver computes (themselves, by gut feeling, or using a dedicated app) the disutility $u_i^{HDV}$ and  computes or gets computed the disutility $u_i^{CAV}$. Then:
\begin{itemize}
\item If $u_i^{HDV}<u_i^{CAV}$, driver $i$ chooses HDV for the next day.
\item If $u_i^{HDV}>u_i^{CAV}$, driver $i$ chooses CAV for the next day.
\item If $u_i^{HDV}=u_i^{CAV}$, driver $i$ sticks to the choice of mode from the previous day.
\end{itemize}

More complex models, such as modeling a nascent market using models like Bass model \cite{Bass} or models using more complex rule-based criteria/inertia/day-to-day variation of preferences based on market situation/social influence etc. are left open.

\section{Offer profiles and assignment plans}
Before we discuss which strategies are optimal for market maximization, we need to understand which of them are feasible, i.e. can be realized by the fleet operator as actual assignments of vehicles to routes on a sequence of days. Accordingly, in this section we present a general framework which enables rigorous study of individualized  travel time offers under fixed network conditions as well as fixed flows of CAVs and HDVs on every route. The drivers $i \in I$ may be considered infinitesimal (then $I$ is assumed to be the unit inverval $[0,1]$ with the Lebesgue measure $di$) or atomic (then $I$ can be a subset of integers, $I=\{1,2,\dots,q^{CAV}\}$ with the measure $di$ understood as counting measure, i.e. $\int_I f(i) di = \sum_{i=1}^{q^{CAV}} f(i)$ for any function $f$). In both settings (discrete and continuous), we first define the offer profiles, which amount to offering every driver $i \in I$ 
a positive real number $T_i^{CAV}$, which is the average travel time that is supposed to be realizable long-term (after many days of driving), and assignment plans, denoted $\mu(i,r)$, which prescribe proportions with which drivers $i$ should be routed via different alternatives $r$ in order to realize a given offer profile. 

\label{Sec_OfferProfiles}
\begin{definition}[{CAV travel time offer profile, assignment plan}]
Let $\bold{q^{CAV}} = (q^{CAV}_1, \dots, q^{CAV}_R)$ be a fixed assignment of CAV flow to routes (henceforth usually called briefly routing) and $q^{CAV} = \sum_{r=1}^R q_r^{CAV}$ be the total flow of CAVs. Let $\bold{t^{CAV}}= (t_1^{CAV},\dots,t_R^{CAV}) = (t_1(\bold{q^{CAV}}+\bold{q^{HDV}}), \dots, t_R(\bold{q^{CAV}}+\bold{q^{HDV}}))$ be the corresponding travel time vector, which for fixed equilibrated $\bold{q^{HDV}}$ is also fixed.
\begin{enumerate}
\item[i)] 
 A CAV travel time {\bf offer profile} subject to routing $\bold{q^{CAV}}$ and route travel times $\bold{t^{CAV}}$ is any measurable function $T^{CAV}: I \to [\min_r t^{CAV}_r, \max_r t^{CAV}_r]$ satisfying the compatibility condition
\begin{equation}
\label{eq_compatibility}
\frac 1 {|I|} \int_I T^{CAV}_i di = \frac 1 {q^{CAV}} \bold{q^{CAV}}\cdot \bold{t^{CAV}} =: \overline{t^{CAV}}
\end{equation}
i.e. the average offered travel time is equal to the average travel time actually experienced in the system, where $\bold{q^{CAV}} \cdot \bold{t^{CAV}} = \sum_{r=1}^R q^{CAV}_r t^{CAV}_r$ is the dot product. 
\item[ii)] An {\bf assignment plan} $\mu$ inducing the offer profile $T^{CAV}$ is a measurable function $\mu: I \times \{1,\dots, R\} \to [0,1]$, $|I| = q^{CAV}$, satisfying:
\begin{eqnarray}
\mbox{(correct route flows)}&\quad \int_I \mu(i, r)di = q_r^{CAV}& \mbox{ for every } r,\label{APmarg1}\\
\mbox{(proportions add up to 1)}&\quad \sum_r \mu(i, r) = 1& \mbox{ for a.e. } i,\label{APmarg2}\\
\mbox{(correct average travel time)}&\quad \sum_r t_r^{CAV} \mu(i,r) = T_i^{CAV}& \mbox{ for a.e. } i. \label{APtime}
\end{eqnarray}

\item[iii)] A {\bf feasible} CAV travel time {\bf offer profile} is a CAV travel time offer profile which is induced by at least one assignment plan.
\end{enumerate}
\end{definition}

\begin{remark}
\label{rem_induced}
\begin{enumerate}
\item[i)]  
$(I, di)$ can be treated as a probability space,  with total mass of $I$ not necessary equal $1$, and $T^{CAV}$ as a random variable. The corresponding induced distribution $\tau$ on the range of $T^{CAV}$ is given by $\tau(B) = \int_{\{i: T_i^{CAV} \in B\}} di$ for Borel sets $B\subset [\min_r t^{CAV}_r, \max_r t^{CAV}_r]$. We will write $T^{CAV} \sim \tau$ and say "$T^{CAV}$ is $\tau$-distributed". Moreover, given the distribution $\tau$ and the underlying space $(I,di)$ one can reconstruct $T^{CAV}$.  
\item[ii)] Similarly, function $i \mapsto (\mu(i,1),\dots,\mu(i,R)) \in \mathbb{R}^R$ induces a distribution $\nu$ on the unit simplex $\Delta^{R-1} = \{(q_1,\dots,q_R): q_1+\dots+q_R = 1, q_1,\dots,q_R\ge 0\}$ by $\nu(B) = \int_{\{i: (\mu(i,1), \dots, \mu(i,R)) \in B\}} di$ for Borel subsets $B \subset \Delta^{R-1}$. We write $\mu \sim \nu$. Furthermore, given $\nu, I$ one can reconstruct $\mu$. 
\item[iii)] Given $\tau$ and $\nu$ we will say that $\nu$ generates $\tau$ if $\tau(B) = \nu(\{\bold{q}: \bold{q}\cdot \bold{t^{CAV}}\in B\})$ or, informally, $\tau(t) "=" \int_{\{\bold{q}:\bold{q}\cdot \bold{t^{CAV}}=t\}} d\nu$.
\end{enumerate}

\end{remark}

\begin{example}
\label{Ex_sym}
If $\bold{q^{CAV}}$ corresponds to system optimum then
$T_i^{SO, sym} = \overline{t^{CAV}}$ for every $i$ is a {\bf symmetric system optimal} offer profile. The corresponding symmetric (the same for every driver) assignment plan is given by $\mu(i,r) =  {q^{CAV}_r}/{q^{CAV}}$, which corresponds to routing every driver proportionally to the flow on every route. E.g. if $\bold{q^{CAV}} = (3,5,6)$, i.e. the flows on three routes are $3, 5, 6$, then every driver is routed via the first route on $3/14$ days, via the second on $5/14$ days and via the third on $6/14$ days. The corresponding measure $\nu$ is $\nu = \delta_{(q_1^{CAV}/q^{CAV}, \dots, q_r^{CAV}/q^{CAV})}$, where $\delta$ is the Dirac mass, which in our example is $\nu = \delta_{(3/14, 5/14, 6/14)}$, compare \cite{Hoffmann}.  
\end{example}

\begin{proposition}[Basic properties of offer profiles and assignment plans]
\label{Prop_AssPlans_prop}
\begin{enumerate}
\item[i)] Every assignment plan $\mu$ defines an offer profile $T^{CAV}$.
\item[ii)] If there are two routes only, then every offer profile $T^{CAV}$ is feasible. Moreover, if the travel times via the two routes are different, the associated assignment plan $\mu$ is unique (almost everywhere).
\item[iii)] If the number of routes is $\ge 3$ then not every offer profile may be feasible
\end{enumerate}
\end{proposition}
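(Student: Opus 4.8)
The plan is to dispatch the three items in order of increasing difficulty, reusing the compatibility condition \eqref{eq_compatibility} as the main tool.

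For (i), given an assignment plan $\mu$ I would simply put $T_i^{CAV}:=\sum_r t_r^{CAV}\mu(i,r)$ and check the three defining requirements of an offer profile. Measurability is inherited from $\mu$, being a finite linear combination of measurable functions; the range requirement $T_i^{CAV}\in[\min_r t_r^{CAV},\max_r t_r^{CAV}]$ is immediate because by \eqref{APmarg2} the vector $(\mu(i,1),\dots,\mu(i,R))$ is a probability vector, so $T_i^{CAV}$ is a convex combination of the $t_r^{CAV}$; and integrating over $I$, interchanging the finite sum with the integral, using \eqref{APmarg1} and $|I|=q^{CAV}$ reproduces exactly the compatibility identity \eqref{eq_compatibility}. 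This part is routine.

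For (ii), suppose first $t_1^{CAV}\neq t_2^{CAV}$. For each $i$ the scalar conditions \eqref{APmarg2} and \eqref{APtime} read $\mu(i,1)+\mu(i,2)=1$ and $t_1^{CAV}\mu(i,1)+t_2^{CAV}\mu(i,2)=T_i^{CAV}$, a nonsingular $2\times 2$ linear system; hence there is a unique solution $\mu(i,1)=\big(T_i^{CAV}-t_2^{CAV}\big)/\big(t_1^{CAV}-t_2^{CAV}\big)$, $\mu(i,2)=1-\mu(i,1)$, which is measurable in $i$ and, since $T_i^{CAV}$ lies between $t_1^{CAV}$ and $t_2^{CAV}$, has both entries in $[0,1]$. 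It remains to verify the flow constraint \eqref{APmarg1}, and this is the one place where \eqref{eq_compatibility} does real work: integrating the explicit formula for $\mu(i,1)$ and substituting $\tfrac1{|I|}\int_I T_i^{CAV}\,di=\overline{t^{CAV}}=(q_1^{CAV}t_1^{CAV}+q_2^{CAV}t_2^{CAV})/q^{CAV}$ with $|I|=q^{CAV}$ collapses, after cancelling $t_1^{CAV}-t_2^{CAV}$, to $\int_I\mu(i,1)\,di=q_1^{CAV}$, and symmetrically for route $2$. So $\mu$ is a genuine assignment plan inducing $T^{CAV}$, and it is the unique one. In the degenerate case $t_1^{CAV}=t_2^{CAV}$ the admissible interval is a point, forcing $T^{CAV}\equiv t_1^{CAV}$, which is induced by any $\mu$ with the correct marginals (e.g. the symmetric plan of Example \ref{Ex_sym}); the profile is still feasible but no longer uniquely induced, consistent with the wording.

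For (iii), I would exhibit an explicit infeasible profile with $R=3$. Take $\bold{t^{CAV}}=(1,2,3)$ and $\bold{q^{CAV}}=(1,1,1)$ — legitimate data for the abstract framework, since the latency functions and $\bold{q^{HDV}}$ may be chosen to realise any such pair — so $q^{CAV}=3$, $\overline{t^{CAV}}=2$; let $I$ have total mass $3$ and set $T_i^{CAV}=1$ on a subset of measure $3/2$ and $T_i^{CAV}=3$ on its complement. Then \eqref{eq_compatibility} holds since the mean is $2$. The key point is that $1$ and $3$ are the extreme travel times: if $T_i^{CAV}=1=\min_r t_r^{CAV}$, then \eqref{APtime} together with \eqref{APmarg2} forces $\mu(i,1)=1$, $\mu(i,2)=\mu(i,3)=0$, and symmetrically $T_i^{CAV}=3$ forces $\mu(i,3)=1$; in both cases $\mu(i,2)=0$. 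Hence any candidate plan has $\int_I\mu(i,2)\,di=0\neq 1=q_2^{CAV}$, contradicting \eqref{APmarg1}, so this profile is induced by no assignment plan. The only genuinely non-mechanical step is the flow-constraint check in (ii) — recognising that \eqref{eq_compatibility} is precisely what makes the pointwise-determined $\mu$ have the right marginals — which is also the conceptual reason (iii) goes the other way: once $R\ge 3$, equation \eqref{APtime} no longer pins down $(\mu(i,1),\dots,\mu(i,R))$, so the fibres carry a genuine degree of freedom that the single scalar constraint \eqref{eq_compatibility} cannot always reconcile with the $R$ marginal constraints \eqref{APmarg1}, the extremal-value example being the simplest obstruction.
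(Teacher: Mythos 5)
Your proof is correct and follows essentially the same route as the paper's: part (i) by integrating \eqref{APtime} against \eqref{APmarg1}, part (ii) by solving the nonsingular $2\times 2$ system pointwise and using \eqref{eq_compatibility} to recover the marginals, and part (iii) by an extremal-travel-time counterexample that forces the middle route to carry zero flow (the paper uses $\bold{t^{CAV}}=(10,20,30)$, $\bold{q^{CAV}}=(0.25,0.5,0.25)$, but the mechanism is identical). Your explicit justification of why the extremal offers pin down $\mu(i,\cdot)$ is in fact slightly more complete than the paper's, which merely asserts infeasibility.
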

\begin{proof}
\begin{enumerate}
\item[i)] Integrating \eqref{APtime}, via which we define $T^{CAV}$, over $I$ and using \eqref{APmarg1} we obtain $\int_I T_i^{CAV} di = \int_I \sum_r t^{CAV}_r\mu(i,r)di = \sum_r t_r^{CAV} q_r^{CAV}$. Dividing by $q^{CAV}$ we obtain \eqref{eq_compatibility} and conclude.
\item[ii)] The case of equal travel times is trivial as every driver can only be offered the same travel time, however there is clearly no uniqueness of assignment plan. Now, assume $t_1^{CAV} < t_2^{CAV}$. For every $i$ there exist unique $\mu(i,1), \mu(i,2)$, given by $\mu(i,1) = \frac {t_2^{CAV} - T_i^{CAV}}{t_2^{CAV} - t_1^{CAV}}$ and $\mu(i,2) = \frac { T_i^{CAV}-t_1^{CAV}}{t_2^{CAV} - t_1^{CAV}}$, such that \eqref{APmarg2}-\eqref{APtime} are satisfied:
\begin{eqnarray}
\mu(i,1)t_1^{CAV} + \mu(i,2)t_2^{CAV} &=& T_i^{CAV} \label{eq_mutmut}\\
\mu(i,1)+\mu(i,2) &=& 1, \label{eq_mumu}
\end{eqnarray}
which proves uniqueness. To verify \eqref{APmarg1}, we note that by integrating \eqref{eq_mutmut} we have $$\left(\int_I\mu(i,1)di\right) t_1^{CAV} + \left(\int_I\mu(i,2)di\right) t_2^{CAV} = \int_I T_i^{CAV} di$$ whereas integration of \eqref{eq_mumu} yields $$\left(\int_I\mu(i,1)di\right) + \left(\int_I\mu(i,2)di\right) = q^{CAV}.$$ On the other hand, by \eqref{eq_compatibility} $$q_1^{CAV} t_1^{CAV} + q_2^{CAV}t_2^{CAV} = \int_I T_i^{CAV} di$$ and $$q_1^{CAV} + q_2^{CAV} = q^{CAV}.$$
As $t_1^{CAV} \neq t_2^{CAV}$ the coefficients of the linear combination are unique and hence $\int_I \mu(i,r) = q_r^{CAV}$ for $r=1,2$. Therefore, \eqref{APmarg1}-\eqref{APtime} hold and so $\mu$ is an assignment plan inducing $T^{CAV}$. 

\item[iii)] Consider $\bold{q^{CAV}} = (0.25,0.5,0.25)$ with $\bold{t^{CAV}} = (10,20,30)$. Then offer profile ${T^{CAV}} \sim 0.5 \delta_{10} + 0.5 \delta_{30}$, offering to half of the drivers travel time $10$ and to the other half travel time $30$, is not feasible even though $\overline{T^{CAV}} = 20 = \overline{t^{CAV}}$. Contrariwise, $T^{CAV} \sim \delta_{20}$ is feasible however the corresponding assignment plan is non-unique, e.g. $\mu_1 \sim \delta_{(0.25,0.5,0.25)}$ (symmetric) and $\mu_2 \sim 0.5\delta_{(0,1,0)} + 0.5\delta_{(0.5, 0, 0.5)}$ both induce $T^{CAV}$. The former assignment plan assigns every driver to route $1$ on a quarter of days, route $2$ on half of days and route $3$ on a quarter of days. The latter assignment plan assigns half of the drivers always to route $2$ and the other half to route $1$ (on half of days) and route $3$ (on half of days). In both assignment plans, every driver experiences on average travel time $20$. 
\end{enumerate}
\end{proof}
In Appendix \ref{Sec_proofs} we show that assignment plans where drivers are routed via more than $2$ alternatives can be decomposed into plans where each driver uses no more than $2$ routes. 

A feasible offer profile prescribes proportions with which every driver should be routed along each alternative. However, it is not clear a priori whether there exists actual assignment of drivers to routes that results in the given proportions and, consequently, the desired average travel times. Below, we show that it is indeed so. We begin by defining multi-day assignment schemes, which, every day, assign individual vehicles to routes, and asymptotically feasible schemes which, in the long run, approach the correct average travel times. 

\begin{definition}[Multi-day assignment scheme of individual drivers to routes]
By multi-day assignment of individual drivers to routes we understand a measurable function $\rho: I \times \{1,2,\dots\} \to \{1,2, \dots, R\}$ which maps the couple $(i, j)$ to the route driver $i$ is supposed to take on day $j$. 
\end{definition}
\begin{definition}
A multi-day scheme of assignment of individual drivers to routes $\rho$ is called \emph{asymptotically feasible} if $\frac 1 J \sum_{j=1}^J t_{\rho(i, j)} \to T^{CAV}$ for every $i$ as $J \to \infty$, i.e. the long term average travel time approaches the travel time prescribed by the offer profile $T^{CAV}$. 
\end{definition}

\begin{figure}
\centering
\includegraphics[scale=0.46]{"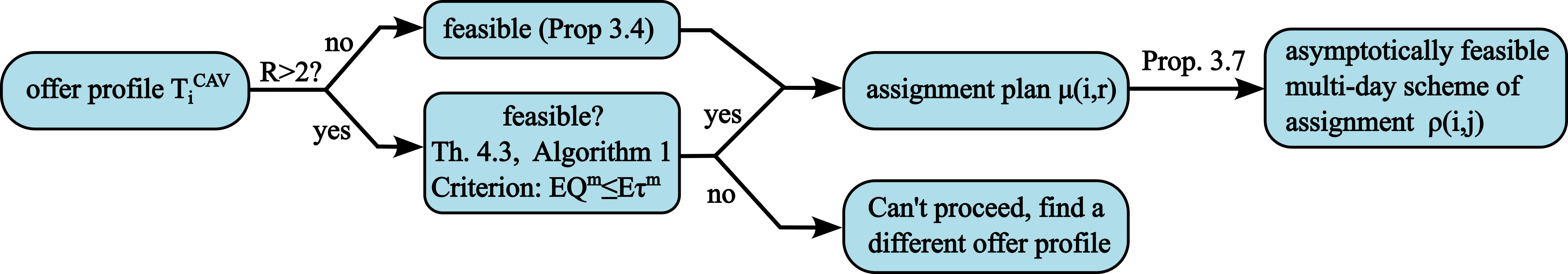"}
\label{fig_results}
\caption{Summary of the technical results of the paper. An offer profile offering every driver $i$ a given mean travel time $T_i^{CAV}$ is always feasible for two routes (Proposition \ref{Prop_AssPlans_prop}), i.e. it admits an assigment plan $\mu(i,r)$ such that $\mu(i,r)$ is the proportion with which driver $i$ is routed via alternative $r$ and, given these proportions, the mean travel time of driver $i$ is $T_i^{CAV}$. For three or more routes, Algorithm \ref{Alg_feasible} and Theorem \ref{Th_feasibility} provide a useful criterion for verifying feasibility as well as a procedure of construction of assignment plans. Given assignment plan $\mu$, there always exists and can be constructed (Proposition \ref{Prop_asymptotically}) a multi-day scheme of assignment which assigns route $\rho(i,j)$ to CAV user $i$ on day $j$ such that the proportions given by assignment plan $\mu(i,\cdot)$ are fulfilled long-term for every user $i$.}
\end{figure}

\begin{proposition}[Feasible travel time offer can be realized as multi-day assignment scheme]
\label{Prop_asymptotically}
Every feasible CAV travel time offer profile admits an asymptotically feasible scheme of assignment of individual drivers to routes such that the flows on routes are every day equal to given fleet flows. 
\end{proposition}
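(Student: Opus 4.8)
The plan is to reduce everything to reproducing, day by day, the \emph{route-frequencies} dictated by an assignment plan. Since the offer profile $T^{CAV}$ is feasible, fix an assignment plan $\mu$ inducing it, so that \eqref{APmarg1}--\eqref{APtime} hold. I claim it is enough to build $\rho$ with exactly $q_r^{CAV}$ vehicles on route $r$ every day and with
\[
\tfrac{1}{J}\,\#\{\, j\le J:\rho(i,j)=r\,\}\ \longrightarrow\ \mu(i,r)\qquad (J\to\infty)
\]
for a.e.\ $i$ and every $r$. Indeed, then $\tfrac{1}{J}\sum_{j=1}^J t_{\rho(i,j)}=\sum_{r}t_r^{CAV}\cdot\tfrac{1}{J}\#\{j\le J:\rho(i,j)=r\}\to\sum_r t_r^{CAV}\mu(i,r)=T_i^{CAV}$ by \eqref{APtime}, which is asymptotic feasibility, while the per-day flow condition is met by construction. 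So the task becomes: schedule a sequence of \emph{daily patterns} (assignments $I\to\{1,\dots,R\}$ with the prescribed route flows) whose long-run usage reproduces $\mu(i,\cdot)$ for every driver.

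For the atomic case I would first record that $\mu=(\mu(i,r))$ is a point of the transportation polytope with all row sums equal to $1$ and column sums $q_1^{CAV},\dots,q_R^{CAV}$ (integral in this setting). Since the defining constraint matrix is totally unimodular and the margins are integral, the vertices of this polytope are $0$--$1$ matrices, and having row sums $1$ they are exactly the incidence matrices of daily patterns routing $q_r^{CAV}$ vehicles via $r$. By Carath\'eodory's theorem, $\mu=\sum_{k=1}^N\lambda_k D_k$ with $\lambda_k>0$, $\sum_k\lambda_k=1$, and each $D_k$ such a pattern. I would then fix an irrational $\alpha$, cut $[0,1)$ into consecutive subintervals $L_1,\dots,L_N$ with $|L_k|=\lambda_k$, and let day $j$ use the pattern $D_{k(j)}$, where $\{j\alpha\}\in L_{k(j)}$ (the countable set of endpoints is harmless). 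Every day exactly $q_r^{CAV}$ vehicles use route $r$; and by Weyl's equidistribution theorem $\tfrac{1}{J}\#\{j\le J:k(j)=k\}\to\lambda_k$ as $J\to\infty$, so $\tfrac{1}{J}\#\{j\le J:\rho(i,j)=r\}=\sum_k(D_k)_{i,r}\,\tfrac{1}{J}\#\{j\le J:k(j)=k\}\to\sum_k\lambda_k(D_k)_{i,r}=\mu(i,r)$, as needed.

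For infinitesimal drivers the scheme is identical once the finite convex decomposition is replaced by a measurable family $(D_\omega)_{\omega\in[0,1)}$ of daily patterns $D_\omega:[0,1]\to\{1,\dots,R\}$, each assigning mass exactly $q_r^{CAV}$ to route $r$, with $\int_0^1\mathbf 1[D_\omega(i)=r]\,d\omega=\mu(i,r)$ for a.e.\ $i$; one then sets $\rho(i,j)=D_{\{j\alpha\}}(i)$. The existence of such a family is a Lyapunov-type convexity statement (the range of the non-atomic vector measure associated with $\mu$ on $I$ is convex and compact), and it can also be produced explicitly by ``stacking'': for each $i$ one cuts the $\omega$-interval $[0,1)$ into consecutive pieces of lengths $\mu(i,1),\dots,\mu(i,R)$, arranged coherently in $i$ so that the $\omega$-slice labelled $r$ always has measure $q_r^{CAV}$ (this coherence is exactly the content of the Lyapunov/Birkhoff step). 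I would carry this out so that $\omega\mapsto D_\omega(i)$ is a step function for each $i$; then $\{\omega:D_\omega(i)=r\}$ is Jordan-measurable, Weyl's theorem applies pointwise in $i$, and the same computation as above gives $\tfrac{1}{J}\#\{j\le J:\rho(i,j)=r\}\to\mu(i,r)$, while the daily flow stays $q_r^{CAV}$ by construction.

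The main obstacle is the decomposition step: meeting the \emph{exact} per-day flow constraints $q_r^{CAV}$ and the \emph{asymptotic} per-driver targets $\mu(i,\cdot)$ at the same time. The equidistribution/scheduling part is routine bookkeeping, and so is the passage from route-frequencies to travel times via \eqref{APtime}. In the atomic case the substance is the Birkhoff--von Neumann/transportation-polytope fact above; in the atomless case it is the corresponding Lyapunov-type decomposition, where one must additionally track measurability of $(i,\omega)\mapsto D_\omega(i)$ and Jordan-measurability of its $\omega$-level sets so that the pointwise-in-$i$ equidistribution argument survives.
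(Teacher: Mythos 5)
Your proof is correct and follows essentially the same strategy as the paper's: decompose the assignment plan $\mu$ into a finite convex combination of valid daily $0$--$1$ patterns respecting the route flows $q_r^{CAV}$, then schedule these patterns so that their long-run frequencies match the convex weights, and pass from route frequencies to travel times via \eqref{APtime}. The only (cosmetic) differences are that you certify the decomposition by the integrality of the transportation polytope directly, whereas the paper duplicates and rescales columns of $\mu$ to form a doubly stochastic matrix and applies Birkhoff--von Neumann before recombining; you realize the frequencies explicitly via Weyl equidistribution of $\{j\alpha\}$ where the paper merely asserts the existence of a suitable sequence (or uses i.i.d.\ sampling); and you sketch the atomless case via Lyapunov convexity, which the paper deliberately defers to approximation.
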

\begin{proof}
We only discuss the discrete case, as it has a clear real-world interpretation, the continuous case addressable by approximation. Namely, for every $i \in I=\{1,\dots,q\}$ the assignment plan $\mu(i,r)$ prescribes the proportion with which driver $i$ should be routed via $r$. The symmetric considerably simpler case of $\mu(i,r)$ independent of $i$ (the same proportions for every driver) was discussed in detail in \cite{Hoffmann}. 
Let $\bold{q^{CAV}} = (q^{CAV}_1,\dots,q^{CAV}_R)$ where $q^{CAV}_1, \dots, q^{CAV}_R$ are nonnegative integers. Let  $T^{CAV}$ be a feasible travel time offer profile and fix the assignment plan $\mu$ which induces it. 
Then the matrix $M$ obtained by replacing the $r$-th column $\mu(\cdot,r)$ of $\mu$ by $q_r^{CAV}$ scaled copies of this column:
\begin{equation*}
M = \text{\Huge[}\underbrace{\frac{\mu(\cdot,1)}{q_1^{CAV}},\dots, \frac{\mu(\cdot,1)}{q^{CAV}_1}}_{q^{CAV}_1}, \underbrace{\frac{\mu(\cdot,2)}{q^{CAV}_2},\dots, \frac{\mu(\cdot,2)}{q^{CAV}_2}}_{q^{CAV}_2}, \dots, \underbrace{\frac{\mu(\cdot,R)}{q^{CAV}_R},\dots, \frac{\mu(\cdot,R)}{q^{CAV}_R}}_{q^{CAV}_R} \text{\Huge]}
\end{equation*}
 is a doubly stochastic $q \times q$ matrix. By Birkhoff-von Neumann theorem \cite{Birkhoff} there exist permutation matrices $P_1, \dots, P_Z$ (where $Z \le q^2$), such that $$M = \sum_{z=1}^Z \theta_z P_z$$ for some $\theta_1, \dots, \theta_z \ge 0$ with $\sum_{z=1}^Z \theta_z = 1$.  Now, each permutation matrix $P_z$ corresponds to a daily assignment. Indeed, by recombining the columns of matrix $P_z$ we obtain a $q \times r$ matrix $A_z$ whose first column is the sum of the first $q_1^{CAV}$ columns of $P_z$, the second column is the sum of the next $q_2^{CAV}$ columns of $P_z$, the third column is the sum of the next $q_3^{CAV}$ columns and so on. Matrix $A_z$ corresponds to assigning driver $i$ to the unique route $r$ such that $A_z(i,r) = 1$. The proof is finished by observing that there exists a sequence $z_1, z_2, z_3, \dots \subset \{1,\dots,Z\}^\infty$ such that the frequency of $z$ approaches 
$\theta_z$ for $z \in \{1,\dots,Z\}$, i.e. 
\begin{equation}
\label{eq_freq} 
\frac 1 J |\{j \in \{1,\dots, J\}: z_j = z\}| \xrightarrow{J \to \infty} \theta_z.
\end{equation}
The resulting multi-day scheme of assignment of individual drivers to routes given by $\rho(i,j) := \arg\max_r  A_{z_j}(i,r)$ is asymptotically feasible since $\mu = \sum_{z=1}^Z \theta_z A_z$ which implies
\begin{eqnarray*}
\frac 1 J \sum_{j=1}^J t^{CAV}_{\rho(i, j)} &=& \sum_z \frac 1 J |\{j \in \{1,\dots, J\}: z_j = z\}| t^{CAV}_{\arg\max_r  A_{z}(i,r)}  \xrightarrow{J \to \infty} \sum_z \theta_z t^{CAV}_{\arg\max_r  A_{z}(i,r)}\\ &=& \sum_z \theta_z \left(\sum_r A_z(i,r) t^{CAV}_r\right) = \sum_r  \left(\sum_z \theta_z A_z(i,r)\right) t^{CAV}_r
= \sum_r \mu(i,r) t_r^{CAV} = T_i^{CAV}.
\end{eqnarray*}

\begin{remark}
The explicit construction of sequence $z_j$ in the proof of Proposition \ref{Prop_asymptotically} is not necessary, as it suffices to apply a fully probabilistic assignment $\rho(i,j):=\arg\max_r  A_{z}(i,r)$ with probability $\theta_z$ for $z=1,\dots,Z$ and every $j$ independently, which by the law of large numbers converges almost surely to the correct proportions. Nevertheless, explicit sequence $z_j$ allows for better potential control of the properties of the sequence of assignments, such as convergence rate. 
\end{remark}

\end{proof}

\begin{example}
\label{ex_10days}
Let ${\bf q^{CAV}} = (2,1,1)$, $q=4$. For 
$\mu = \begin{bmatrix}
0.2 & 0.3 & 0.5\\
0.0 & 0.6 & 0.4\\
0.8 & 0.1 & 0.1\\
1.0 & 0.0 & 0.0
\end{bmatrix}
$ we have $M = \begin{bmatrix}
0.1 & 0.1 & 0.3 & 0.5\\
0.0 & 0.0 & 0.6 & 0.4\\
0.4 & 0.4 & 0.1 & 0.1\\
0.5 & 0.5 & 0.0 & 0.0
\end{bmatrix}
$ which is doubly stochastic. The corresponding (non-unique) Birkhoff decomposition is e.g.
\begin{eqnarray*}
10 M = \begin{bmatrix}
1 & 1 & 3 & 5\\
0 & 0 & 6 & 4\\
4 & 4 & 1 & 1\\
5 & 5 & 0 & 0
\end{bmatrix}
= \begin{bmatrix}
1 & 0 & 0 & 0\\
0 & 0 & 1 & 0\\
0 & 0 & 0 & 1\\
0 & 1 & 0 & 0
\end{bmatrix}
+ \begin{bmatrix}
0 & 1 & 0 & 0\\
0 & 0 & 0 & 1\\
0 & 0 & 1 & 0\\
1 & 0 & 0 & 0
\end{bmatrix} +
\begin{bmatrix}
0 & 0 & 3 & 0\\
0 & 0 & 0 & 3\\
3 & 0 & 0 & 0\\
0 & 3 & 0 & 0 
\end{bmatrix} + 
\begin{bmatrix}
0 & 0 & 0 & 4\\
0 & 0 & 4 & 0\\
0 & 4 & 0 & 0\\
4 & 0 & 0 & 0 
\end{bmatrix} + 
\begin{bmatrix}
0 & 0 & 0 & 1\\
0 & 0 & 1 & 0\\
1 & 0 & 0 & 0\\
0 & 1 & 0 & 0 
\end{bmatrix} 
\end{eqnarray*}
which yields $M = 0.1 P_1 + 0.1 P_2 + 0.3 P_3 + 0.4 P_4 + 0.1 P_5$ and $\mu = 0.1 A_1 + 0.1 A_2 + 0.3 A_3 + 0.4 A_4 + 0.1 A_5$, where 
\begin{eqnarray*}
A_1 = \begin{bmatrix}
1 & 0 & 0\\
0 & 1 & 0\\
0 & 0 & 1\\
1 & 0 & 0
\end{bmatrix}, 
A_2 = \begin{bmatrix}
1 & 0 & 0\\
0 & 0 & 1\\
0 & 1 & 0\\
1 & 0 & 0
\end{bmatrix}, 
A_3 =
\begin{bmatrix}
0 & 1 & 0\\
0 & 0 & 1\\
1 & 0 & 0\\
1 & 0 & 0 
\end{bmatrix}, 
A_4 =  
\begin{bmatrix}
0 & 0 & 1\\
0 & 1 & 0\\
1 & 0 & 0\\
1 & 0 & 0 
\end{bmatrix},  
A_5 = \begin{bmatrix}
0 & 0 & 1\\
0 & 1 & 0\\
1 & 0 & 0\\
1 & 0 & 0 
\end{bmatrix} 
\end{eqnarray*}
We have $\theta_1 = 0.1, \theta_2 = 0.1, \theta_3 = 0.3, \theta_4 = 0.4, \theta_5 = 0.1$ and so the simplest sequence $z_1, z_2, \dots $ satisfying \eqref{eq_freq} is given by repeating $1,2,3,3,3,4,4,4,4,5$. This corresponds to drivers assigned on the first $10$ days (the pattern is repeated, on the following days) as shown in Table \ref{tab1}. 
\begin{table}[h!]
\centering
\begin{tabular}{c||c|c|c|c|c|c|c|c|c|c|}
Day & 1 & 2& 3& 4& 5& 6& 7& 8& 9& 10\\
\hline
\hline
Driver 1's route & $1$&$1$&$2$&$2$&$2$&$3$&$3$&$3$&$3$&$3$\\
\hline
Driver 2's route & $2$&$3$&$3$&$3$&$3$&$2$&$2$&$2$&$2$&$2$\\
\hline
Driver 3's route & $3$&$2$&$1$&$1$&$1$&$1$&$1$&$1$&$1$&$1$\\
\hline
Driver 4's route & $1$&$1$&$1$&$1$&$1$&$1$&$1$&$1$&$1$&$1$\\
\hline
\end{tabular}
\caption{Assignment of drivers to routes on the first $10$ days of travel such that Driver 1's proportions of using routes $1$,$2$,$3$ are $0.2$,$0.3$,$0.5$, respecitvely, Driver 2's proportions are $0$,$0.6$,$0.4$, Driver 3's are $0.8$,$0.1$,$0.1$ and Driver 4's are $1.0$,$0$,$0$. Simultaneously, the flows on routes $1,2,3$ are equal $2,1,1$ every day. Such an assignment can be constructed, thanks to Proposition \ref{Prop_asymptotically} for any stipulated assignment plan $\mu$ prescribing the proportions of using different routes by each driver which are compatible with given route flows.}
\label{tab1}
\end{table}

\noindent Note that on each day we have two drivers assigned to route  $1$ and $1$ driver assigned to routes $2$ and $3$ which was stipulated by $\bold{q^{CAV}}$. If an offer profile $T^{CAV}$ is induced by $\mu$ then the above assignment scheme is asymptotically feasible and can be used by a fleet operator to achieve the desired travel times for the drivers. 
\end{example}

\begin{remark}
\label{Rem_compHof}
Proposition \ref{Prop_asymptotically} generalizes the results from \cite{Hoffmann} to the case of non-symmetric $\mu$. Note that in \cite{Hoffmann} Proposition 2.11 the daily assignments were generated as $P^jA_1$ for a given initial assignment matrix $A_1$ and a single permutation matrix $P$, which easily yielded the correct (the same for every user) proportions of routing via different alternatives. In the case of non-symmetric $\mu$, however, obtaining the correct proportions is not straightforward any longer and so in Proposition \ref{Prop_asymptotically} we use the Birkhoff algorithm which relies on optimal matching theorems. We also note that in the non-symmetric setting there is in general no hope of obtaining a cycle like in \cite{Hoffmann} and only asymptotic sequences can be produced. The cyclical assignment in Example \ref{Ex_sym} was possible only thanks to the special form (which we will not have in realistic use cases) of the assignment plan containing finite decimal fractions. 

In spite of that, \cite{Hoffmann} also discussed approximation algorithms in place of cyclical assignments and we conjecture that a greedy approximation algorithm similar to the algorithm  in \cite{Hoffmann} Proposition 2.17 could also be designed.  
\end{remark}
\noindent In view of Remark \ref{Rem_compHof} we have the following
\begin{openproblem}
In \cite{Hoffmann} the assignment sequences for \emph{symmetric} $\mu$ were optimized (with respect to different behavioral measures) to a large degree. Finding optimal assignments in the general non-symmetric setting considered here is open.  
\end{openproblem}

\section{Greedy algorithm for verifying the feasibility of offer profile $T^{CAV}$}
\label{Sec_greedy}
By Proposition \ref{Prop_AssPlans_prop}, every offer profile $T^{CAV}$ is feasible if there are two routes, this however is no longer true if there are more routes. Accordingly, in this section we present a greedy algorithm which verifies whether a given offer profile is feasible and constructs a corresponding assignment plan as a byproduct. We call $FEASIBLE((q_1,q_2,\dots,q_N),(t_1,t_2,\dots,t_N),\tau)$, see Algorithm \ref{Alg_feasible}, the function which returns 
\begin{itemize}
\item $(TRUE, \nu)$ if an offer profile subject to $(\bold{q},\bold{t}) = ((q_1,\dots,q_N),(t_1,\dots,t_N))$ and distributed according to $\tau$ is feasible. The corresponding assignment plan may be then distributed according to $\nu$. 
\item $(FALSE, \nu)$ if the offer profile distributed according to $\tau$ is not feasible. 
\end{itemize}
For notational convenience, in Algorithm \ref{Alg_feasible} by $\nu^{N-1}_{+n}$ we denote a measure on the $N-1$-dimensional simplex $\Delta^{N-1}$ which is defined by embedding measure $\nu^{N-1}$ (defined on $\Delta^{N-2}$) into a higher dimensional simplex with extra coordinate $q_n=0$ as follows. For every Borel $B \subset \Delta^{N-1}$ let
$$\nu^{N-1}_{+n} (B) := \nu^{N-1}(B'),$$ where $B' = \{(q_1',\dots,q'_{n-1},q'_{n+1},\dots,q'_N) : (q_1',\dots,q'_{n-1},0,q'_{n+1},\dots,q'_N) \in B\}.$

\begin{algorithm}
\caption{$FEASIBLE((q_1,q_2,\dots,q_N),(t_1,t_2,\dots,t_N),\tau)$}\label{Alg_feasible}
\begin{algorithmic}
\Ensure $t_1<t_2<t_3\dots < t_N$, $q_1,\dots,q_N \ge 0$, $\tau$ is a measure on $\mathbb{R}$.
\State $t_{N+1}\gets \infty$
\State $q_{N+1}\gets 0$
\If{$\tau((-\infty,t_1)) > 0$}
	\Return (FALSE, $\nu = 0$)
\ElsIf{$N=0$} \Return (TRUE, $\nu=0$) 
\ElsIf{($\tau([t_1, t_N]) = 0$)} 
	\Return (TRUE, $\nu = 0$)

\Else \State Let $n$ be the smallest integer with $\tau([t_n, t_{n+1}])>0$
\If{$q_1 = 0$}
	\State $(val,\nu^{N-1}) \gets FEASIBLE((q_2,\dots,q_N),(t_2,\dots,t_N),\tau)$
	\State $\nu^N \gets \nu^{N-1}_{+1}$
	\State \Return $(val \mbox{ AND TRUE}, \nu^N)$	
\ElsIf {$q_{n+1}=0$ AND $N>1$}
	\State $(val,\nu^{N-1}) \gets FEASIBLE((q_1,\dots,q_{n},q_{n+2},\dots,q_N),(t_1,\dots,t_n,t_{n+2},\dots,t_N),\tau)$
	\State $\nu^N \gets \nu^{N-1}_{+(n+1)}$
	\State \Return $(val \mbox{ AND TRUE}, \nu^N)$	
\EndIf

\State Define, for $t \in [t_n,t_{n+1})$, $\alpha_1(t) := \frac{t_{n+1} - t}{t_{n+1} - t_1}$
\State Define, for $t \in [t_n,t_{n+1})$, $\alpha_{n+1}(t) := \frac {t-t_1}{t_{n+1}-t_1}$
\State It holds $t = \alpha_1(t) t_1 + \alpha_{n+1}(t) t_{n+1}$ for every $t \in [t_n,t_{n+1}]$. 
\State Define for $m\ge 0$ the function $m\mapsto t^m$ by $t^m := \inf\{t:\tau([t_n,t])>m\}$
\State Define for $m\ge 0$ the function $m\mapsto \tau^m$ by $\tau^m := \tau|_{[t_n,t^m)} + (m-\tau([t_n,t^m)))\delta_{t^m}$
\State Define for $m\ge 0$ the function $m\mapsto J_1(m)$ by $J_1(m):=\int_{\mathbb{R}} \alpha_1(t)d\tau^m$
\State Define for $m\ge 0$ the function $m\mapsto J_{n+1}(m)$ by $J_{n+1}(m):=\int_{\mathbb{R}} \alpha_{n+1}(t)d\tau^m$
\State $M \gets \tau([t_n,t_{n+1}])$
\State $M_1 \gets \sup\{m \le M: J_1(m)<q_1\}$
\State $M_{n+1} \gets \sup\{m \le M: J_{n+1}(m)<q_{n+1}\}$
\State $M_{min} \gets \min\{M_1,M_{n+1}\}$
\State $(val, \nu^{N}) \gets FEASIBLE((q_1-{J}_1(M_{min}),q_2,\dots,q_{n}, q_{n+1} - J_{n+1}(M_{min}), q_{n+2},\dots,q_N), (t_1,\dots, t_N), \tau - {\tau^{M_{min}}}) $

\State
$\nu(dq) \gets \int_{[t_n,t_{n+1}]}\delta_{(\alpha_1(t), \underbrace{0, \dots, 0}_{n-1}, \alpha_{n+1}(t), \underbrace{0, \dots, 0}_{N-(n+1)})}(dq) d{\tau^{M_{min}}}(t)$
\State which means that $\nu(B) = {\tau^{M_{min}}}(\{t: (\alpha_1(t), \underbrace{0, \dots, 0}_{n-1}, \alpha_{n+1}(t), \underbrace{0, \dots, 0}_{N-(n+1)}) \in B\})$ for Borel sets $B \subset \Delta^{N-1}$.
\State \Return $(val \mbox{ AND TRUE}, \nu^N + \nu)$
\EndIf
\end{algorithmic}
\end{algorithm}

\begin{proposition}[Stopping]
\label{Prop_Stopping}
Algorithm FEASIBLE always stops. 

\end{proposition}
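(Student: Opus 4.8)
The plan is to equip the recursive calls of Algorithm~\ref{Alg_feasible} with a rank in a well-founded order and show that this rank decreases at least at every second step, so the recursion tree is finite. To a call with data $((q_1,\dots,q_N),(t_1,\dots,t_N),\tau)$ that is not disposed of by one of the three guard clauses --- that is, it does not return at $\tau((-\infty,t_1))>0$, at $N=0$, or at $\tau([t_1,t_N])=0$ --- I attach the pair $\Phi=(N,\,N-n)\in\mathbb N\times\mathbb N$ with the lexicographic order, where $n$ is the index the algorithm selects, namely the least one with $\tau([t_n,t_{n+1}])>0$. I will argue that along every branch of the recursion $\Phi$ is non-increasing and never stays constant for two consecutive steps; since $\mathbb N\times\mathbb N$ ordered lexicographically has no infinite descending chain, this forces the algorithm to stop.

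The verification is by cases on the branch a call takes. In the two \emph{collapse} branches ($q_1=0$, and $q_{n+1}=0$ with $N>1$) the successor call replaces $N$ by $N-1$, so $\Phi$ strictly drops in its first coordinate. In the \emph{main} branch $N$ is unchanged, and the behaviour is governed by the comparison of $M_{\min}=\min\{M_1,M_{n+1}\}$ with $M=\tau([t_n,t_{n+1}])$. If $M_{\min}=M$, I plug $M=\tau([t_n,t_{n+1}])$ into $\tau^{M}=\tau|_{[t_n,t^{M})}+(M-\tau([t_n,t^{M})))\delta_{t^{M}}$ to conclude that $\tau-\tau^{M}$ carries no mass on $[t_n,t_{n+1}]$; since $\tau$ had no mass on $[t_1,t_n]$ either (minimality of $n$ together with the guard $\tau((-\infty,t_1))=0$), the successor's least index $n'$ satisfies $n'\ge n+1$, so $N-n$ strictly drops, unless a guard returns first. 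If instead $M_{\min}<M$, then $M_1<M$ or $M_{n+1}<M$; because each increment $\tau^{m'}-\tau^{m}$ has mass $m'-m$ and the weights $\alpha_1,\alpha_{n+1}$ take values in $[0,1]$, the maps $m\mapsto J_1(m)$ and $m\mapsto J_{n+1}(m)$ are Lipschitz, hence continuous, so $M_1<M$ forces $J_1(M_1)=q_1$ and likewise for $M_{n+1}$. Consequently the successor call has $q_1=0$ or $q_{n+1}=0$, and --- since only a proper initial slice of $[t_n,t_{n+1}]$ was removed --- in that successor $n'=n$ and $N$ is unchanged, so the successor either returns at a guard or enters a collapse branch, dropping $N$. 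Hence $\Phi$ does not remain constant for two steps in a row, and the recursion terminates.

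Two genuinely degenerate configurations must be handled directly. When $N\le1$ the algorithm is finite by inspection: for $N=1$ one has $t_{n+1}=\infty$, hence $\alpha_1\equiv1$, $J_1(m)=m$ and $J_{n+1}\equiv0$, and only a bounded number of further calls can occur before the guard $\tau((-\infty,\cdot))>0$ or $\tau([t_1,t_N])=0$ fires. The case $n=N$ with $N\ge2$ can occur only when $\tau$ sits entirely at the largest route time $t_N$; then the guard $\tau([t_1,t_N])=0$ settles the subcase $\tau(\{t_N\})=0$, and in the remaining subcase a short separate argument (or a minor strengthening of the $q_{n+1}=0$ guard so it never deletes the fictitious route $t_{N+1}=\infty$) again yields termination.

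I expect the real obstacle to be the measure-theoretic computation underlying the second paragraph: proving cleanly that $M_{\min}=M$ makes $\tau-\tau^{M}$ vanish on $[t_n,t_{n+1}]$ --- which must cope with a possible atom of $\tau$ at $t_{n+1}$ and with the infimum defining $t^{M}$ not being attained --- and the parallel claim that $M_{\min}<M$ zeroes out exactly one of $q_1,q_{n+1}$. The auxiliary facts (monotonicity and Lipschitz continuity of $J_1,J_{n+1}$ in $m$, and the preservation through every branch of the invariant that $\tau$ places no mass strictly above $t_N$) are routine once the definitions are unpacked, but they are what makes the rank argument go through.
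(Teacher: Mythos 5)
Your proof is correct and follows essentially the same route as the paper's: the paper argues informally that each call either decreases the number of route times, exhausts $\tau$ on an interval $[t_n,t_{n+1}]$ (your "$N-n$ drops"), or zeroes one of $q_1,q_{n+1}$ so that the next call collapses a route; your lexicographic rank $(N,\,N-n)$ is just a formalization of that same progress measure. Your explicit treatment of the continuity of $J_1,J_{n+1}$ and of the degenerate case where all remaining mass sits at $t_N$ (the fictitious route $t_{N+1}=\infty$) is more careful than the paper, which glosses over these points, but it does not change the substance of the argument.
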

\begin{proof}
Every call of $FEASIBLE((q_1,\dots,q_K), (t_1,\dots,t_K), \tau)$ satisfies:
\begin{itemize}
\item $t_1<t_2<\dots<t_K$,
\item $q_1,\dots,q_K \ge 0$,
\item $\sum_{k=1}^K q_k = \tau(\mathbb{R})$
\item $\sum_{k=1}^K q_k t_k = \int_{\mathbb{R}} t d\tau(t)$. 
\end{itemize}
Indeed, the first two conditions are easily verified, the third follows by the fact that $\alpha_1(t) + \alpha_{n+1}(t) = 1$ and hence $J_1(M_{min}) + J_{n+1}(M_{min}) = \tau^{M_{min}}(\mathbb{R})$. Finally, the last invariant is satisfied in the initial call by definition of travel time offer and in the recursive calls, we have, by integration of $t=\alpha_1(t)t_1 + \alpha_{n+1}(t)t_{n+1}$, 
\begin{equation*}
\int_{\mathbb{R}}  t d({\tau^{M_{min}}}(t)) = \int_{\mathbb{R}} (\alpha_1(t)t_1 + \alpha_{n+1}(t)t_{n+1}) d{\tau^{M_{min}}}(t) = J_1(M_{min})t_1 + J_{n+1}(M_{min}) t_{n+1}.
\end{equation*} 
Hence, the calls of FEASIBLE are correct. 
To prove that the algorithm will stop, we note that every call of FEASIBLE either decreases the number of points $t_n$ or nullifies the measure $\tau$ on one of the intervals $[t_n,t_{n+1}]$ or nullifies one of the coefficients $q_n$ (which in the next call results in decreasing the number of points $t_n$). As there are only finitely many points $t_n$ initially, then the algorithm has to stop. 
\end{proof}

\begin{definition}[Initial Section of measure]
\label{def_InitSec}
For every non-negative measure $\lambda$ on $\mathbb{R}$ and every $m \in [0,\lambda(\mathbb{R})]$ define the initial section $\lambda^m$ of $\lambda$ as follows.
\begin{eqnarray*}
x(m) &:=& \inf \{x \in \mathbb{R}: \lambda((-\infty,x])\ge m \},\\
\lambda^m &:=& \lambda|_{(-\infty,x(m))} + (m - \lambda((-\infty,m)))\delta_{x(m)}.
\end{eqnarray*}
\end{definition}

\begin{theorem}[Feasibility of offer profile]
\label{Th_feasibility}
Let $T^{CAV}$ be an offer profile subject to $(\bold{q},\bold{t}) = ((q_1,\dots,q_N),(t_1,\dots,t_N))$ and distributed according to $\tau$. Then the following conditions are equivalent:
\begin{enumerate}
\item[i)] $T^{CAV}$ is feasible.
\item[ii)] $FEASIBLE(\bold{q},\bold{t},\tau)$ returns $(TRUE,\nu)$.
\item[iii)] For every $m \in [0,q^{CAV}]$
\begin{equation}
\label{eq_Qm}
\mathbb{E}(Q^m) \le \mathbb{E}(\tau^m),
\end{equation}  
where $Q$ is the measure $Q = \sum_{r=1}^R q_r\delta_{t_r}$ and $\tau^m, Q^m$ are the initial sections of the respective measures defined in Definition \ref{def_InitSec}. 
\end{enumerate}

\end{theorem}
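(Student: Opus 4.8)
The plan is to prove the cycle of implications (ii) $\Rightarrow$ (i) $\Rightarrow$ (iii) $\Rightarrow$ (ii), treating the greedy algorithm FEASIBLE as the constructive bridge. For (ii) $\Rightarrow$ (i), I would argue by induction on the recursion depth (which terminates by Proposition \ref{Prop_Stopping}), showing that the measure $\nu^N + \nu$ returned on a TRUE run is a genuine assignment plan distribution. The invariants already recorded in the proof of Proposition \ref{Prop_Stopping} — that every recursive call preserves $\sum_k q_k = \tau(\mathbb R)$ and $\sum_k q_k t_k = \int t\,d\tau$ — give the compatibility/marginal conditions \eqref{APmarg1}–\eqref{APtime} automatically; the key local fact is that in the generic branch the increment $\nu(dq)$ puts mass, for each realized travel time $t\in[t_n,t_{n+1}]$, on the vertex-pair split $(\alpha_1(t),0,\dots,0,\alpha_{n+1}(t),0,\dots,0)$, which by construction satisfies $\alpha_1(t)t_1 + \alpha_{n+1}(t)t_{n+1} = t$ and $\alpha_1(t)+\alpha_{n+1}(t)=1$, and the consumed route-mass $J_1(M_{min}), J_{n+1}(M_{min})$ exactly matches what is subtracted from $q_1,q_{n+1}$ in the recursive call. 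Reassembling across recursion levels via the embeddings $\nu^{N-1}_{+n}$ gives a measure on $\Delta^{N-1}$ that generates $\tau$ in the sense of Remark \ref{rem_induced}(iii), hence a feasible offer profile by Proposition \ref{Prop_asymptotically}'s surrounding discussion.

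For (i) $\Rightarrow$ (iii) I would use a majorization/rearrangement argument. Given any assignment plan $\mu\sim\nu$ inducing $T^{CAV}\sim\tau$, fix a threshold mass $m$ and compare: on the one hand the measure $Q=\sum_r q_r\delta_{t_r}$ records "where the route capacity actually is", and on the other hand $\tau$ records "what travel times were promised". The point is that each driver's offered time $T_i^{CAV} = \sum_r t_r\mu(i,r)$ is an average of the $t_r$ weighted by a probability vector whose $r$-marginals integrate to $q_r$; so the cheapest $m$ units of promised travel time can never be cheaper than the cheapest $m$ units of genuinely available capacity — a driver cannot be offered time below $\min_r t_r$, and more quantitatively, routing the bottom-$m$ slice of drivers as cheaply as possible still cannot beat allocating the $m$ cheapest capacity-units. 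Formally, for any measurable $A\subset I$ with $|A|=m$, $\int_A T_i^{CAV}\,di = \sum_r t_r \int_A \mu(i,r)\,di \ge \mathbb E(Q^m)$ because $\big(\int_A\mu(i,r)\,di\big)_r$ is a sub-distribution dominated coordinatewise by $(q_r)_r$ with total mass $m$, and the minimum of $\sum_r t_r x_r$ over such $x$ with increasing $t_r$ is the greedy "fill the cheapest routes first" value $\mathbb E(Q^m)$; minimizing the left side over $A$ gives $\mathbb E(\tau^m)$, which is \eqref{eq_Qm}.

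For (iii) $\Rightarrow$ (ii), the contrapositive is cleanest: if FEASIBLE returns FALSE, it does so at exactly one of two lines — either $\tau((-\infty,t_1))>0$, which immediately violates \eqref{eq_Qm} for small $m$ since $\mathbb E(\tau^m) < m\,t_1 = \mathbb E(Q^m)$ (the bottom capacity sits at $t_1$) — or in a deeper recursive call after the greedy peeling has strictly exhausted the cheap capacity while promised mass at low travel times remains. Here I would show that the greedy choice of $M_{min}=\min\{M_1,M_{n+1}\}$ is optimal: it consumes the largest legitimate chunk of $[t_n,t_{n+1}]$-mass using only routes $1$ and $n+1$, and any feasible plan must route this mass through routes with times $\le t_{n+1}$ in a way that cannot do better, so a FALSE deeper down forces a violated instance of \eqref{eq_Qm} at the corresponding threshold by "lifting" the local obstruction back to the original $\tau$ and $Q$. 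The main obstacle I anticipate is precisely this last lifting step: one must verify that the quantities $J_1, J_{n+1}$ and the residual measure $\tau - \tau^{M_{min}}$ track the cumulative cost functions $m\mapsto\mathbb E(\tau^m)$ and $m\mapsto\mathbb E(Q^m)$ faithfully under recursion — i.e. that deleting a route (branches $q_1=0$ or $q_{n+1}=0$) or peeling an initial section shifts both sides of \eqref{eq_Qm} consistently — so that optimality of the greedy step at every node assembles into global optimality. Handling the atoms of $\tau$ (the $(m-\tau([t_n,t^m)))\delta_{t^m}$ correction terms) carefully, so the infima and suprema defining $t^m, M_1, M_{n+1}$ behave, is the fiddly part of making this rigorous.
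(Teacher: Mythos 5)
Your plan is correct and follows essentially the same route as the paper: the same implication cycle (ii)$\Rightarrow$(i)$\Rightarrow$(iii)$\Rightarrow$(ii), with the assignment plan reassembled from the returned measure $\nu$, the slicing/majorization argument for (i)$\Rightarrow$(iii), and the contrapositive for (iii)$\Rightarrow$(ii). The ``lifting'' step you flag as the main obstacle is resolved in the paper by a short local argument: at the failing nested call the invariants give $\mathbb{E}(Q^m)=\mathbb{E}(\tau^m)$, while the next $\epsilon$ of $\tau$-mass is supported below $\hat{t}_1$ and the next $\epsilon$ of $Q$-mass at or above $\hat{t}_1$, so $\mathbb{E}(Q^{m+\epsilon})>\mathbb{E}(\tau^{m+\epsilon})$.
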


\begin{proof}
We prove the following implications:
\begin{enumerate}
\item[$ii)\to i)$] If $FEASIBLE(\bold{q},\bold{t},\tau)$ returns $(TRUE,\nu)$ then the obtained offer profile distribution $\nu$ allows for construction of an assignment plan inducing $T^{CAV}$. Indeed, 
by Algorithm \ref{Alg_feasible} we have:
\begin{equation*}
\nu = \int_{\mathbb{R}} \left( \sum_{k=1}^{K(t)} c_k(t)\delta_{\bold{a}^k(t)} \right) d\tau(t), 
\end{equation*}
where $\sum_{k=1}^{K(t)} c_k(t) = 1$ for every $t$ and $\bold{a}^k(t)$ belong to the simplex $\Delta^{R-1}$. $K(t)$ depends on the number of routings used to cover travel time $t$, and it may be greater than $1$ if $\tau$ has an atom at $t$ (i.e. when $\tau(\{t\})>0$) while it is equal $1$ otherwise. Setting $$\mu(i,r) := \sum_{k=1}^{K(T_i^{CAV})} c_k(T_i^{CAV})\left[\bold{a}^k(T_i^{CAV})\right]_r$$ we obtain the desired assignment plan. Indeed, $\sum_r \mu(i,r) = 1$ by the fact that $\bold{a}^k$'s coordinates add up to one, $$\int_I \mu(i,r)di =  \int_{\mathbb{R}} \sum_{k=1}^{K(t)} c_k(t)\left[\bold{a}^k(t)\right]_r d\tau(t) = \left[\int_{\mathbb{R}} \sum_{k=1}^{K(t)} c_k(t)\bold{a}^k(t) d\tau(t)\right]_r = q_r,$$ 
where the last inequality follows by the definitions of $J_1(m), J_{n+1}(m)$ and observation that $c_k(t)d\tau(t)$ corresponds to $\tau^m$ in a given call of Algorithm \ref{Alg_feasible} and finally $$\sum_r t_r^{CAV} \mu(i,r) =  \sum_{k=1}^{K(T_i^{CAV})} c_k(T_i^{CAV})\sum_r t_r^{CAV}\left[\bold{a}^k(T_i^{CAV})\right]_r = \sum_{k=1}^{K(T_i^{CAV})} c_k(T_i^{CAV})T_i^{CAV} = T_i^{CAV}.$$

\item[$iii)\to ii)$] If $FEASIBLE(\bold{q},\bold{t},\tau)$ returns $(FALSE,\nu)$ then at some nested call $FEASIBLE(\bold{\hat{q}},\bold{\hat{t}},\hat{\tau})$ we have $\hat{\tau}((-\infty,\hat{t}_1))>0$, where $\hat{\tau} = \tau - \tau^m$ for some $m$ and $\bold{\hat{q}}, \bold{\hat{t}}$ are the corresponding narrowed down sets of flows and travel times. Consequently, $\mathbb{E}({Q^m}) = \mathbb{E}({\tau^m})$ (by the invariants of Algorithm \ref{Alg_feasible}, see Proposition \ref{Prop_Stopping}) however $supp(\tau^{m+\epsilon} - \tau^m) \subset (-\infty,\hat{t_1})$ and $supp(Q^{m+\epsilon} - Q^m) \subset [\hat{t_1},\infty)$  for some $\epsilon>0$. As a result, $\mathbb{E}({Q^{m+\epsilon} - Q^m)} \ge \hat{t_1} > \mathbb{E}({\tau^{m+\epsilon} - \tau^m})$ and hence $\mathbb{E}({Q^{m+\epsilon})} > \mathbb{E}({\tau^{m+\epsilon}})$. 
\item [$i)\to iii)$] 
Fix $m \in [0,q^{CAV}]$. Then a feasible offer profile narrowed down to only drivers whose offers are covered by $\tau^m$ (possibly parts of drivers in the discrete case) is still feasible. These drivers are necessarily routed using some $\hat{Q} \le Q$ which satisfies $\mathbb{E}(\hat{Q}) = \mathbb{E}(\tau^m)$. Hence, $\mathbb{E}({Q^m})\le \mathbb{E}(\hat{Q}) = \mathbb{E}(\tau^m)$ where the first inequality is by the definition of initial section of a measure. This completes the proof. 
\end{enumerate}
\end{proof}

\begin{corollary}
Condition iii) in Theorem \ref{Th_feasibility} is a  useful practical criterion to verify feasibility of a given offer profile without the necessity to construct it explicitly, e.g. by running Algorithm \ref{Alg_feasible}. 
\end{corollary}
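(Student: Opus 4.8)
Read as the assertion that feasibility of $T^{CAV}$ can be decided by a finite, elementary computation on the data $(\bold{q^{CAV}},\bold{t^{CAV}},\tau)$ alone — without ever executing Algorithm~\ref{Alg_feasible} or exhibiting an assignment plan — the plan is to (a) invoke the equivalence $i)\Leftrightarrow iii)$ of Theorem~\ref{Th_feasibility} to see that $iii)$ is already a valid test, and (b) show that $iii)$ reduces to verifying \eqref{eq_Qm} at finitely many explicitly computable values of $m$.

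First I would note that, by Theorem~\ref{Th_feasibility}, the family of inequalities \eqref{eq_Qm} holds for all $m\in[0,q^{CAV}]$ exactly when $T^{CAV}$ is feasible, so nothing more needs to be proved for $iii)$ to be a feasibility criterion. Its only ingredients are the measure $Q=\sum_r q_r\delta_{t_r}$, read off immediately from the routing $\bold{q^{CAV}}$ and the fixed route travel times $\bold{t^{CAV}}$, and the target offer distribution $\tau$; the only operations are forming the initial sections $Q^m,\tau^m$ of Definition~\ref{def_InitSec} and integrating against them. In particular no measure $\nu$ on $\Delta^{R-1}$, no assignment plan $\mu$, and no multi-day scheme $\rho$ is produced, in contrast with the constructive route through $FEASIBLE(\bold{q^{CAV}},\bold{t^{CAV}},\tau)$. (I would also remark that, in view of $\mathbb{E}(Q^{q^{CAV}})=\mathbb{E}(\tau^{q^{CAV}})$ forced by \eqref{eq_compatibility}, condition $iii)$ is exactly the statement that $\tau$ majorizes $Q$, i.e. $Q$ is below $\tau$ in the convex order.)

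Next I would make precise the word ``practical''. Writing $\mathbb{E}(\lambda^m)=\int_0^m q_\lambda(s)\,ds$, where $q_\lambda$ is the (left-continuous) quantile function of a non-negative finite measure $\lambda$, one checks that $m\mapsto\mathbb{E}(Q^m)$ is continuous, convex and piecewise linear, with breakpoints at the partial sums $q_1,\,q_1+q_2,\dots$ (recall $t_1<\dots<t_N$, so the flows are already ordered by travel time), whereas $m\mapsto\mathbb{E}(\tau^m)$ is convex; both vanish at $m=0$ and agree at $m=q^{CAV}$. Setting $g(m):=\mathbb{E}(\tau^m)-\mathbb{E}(Q^m)$, on any interval on which $q_Q$ is constant the derivative $g'(m)=q_\tau(m)-q_Q(m)$ is nondecreasing, so $g$ is convex there; hence the minimum of $g$ over $[0,q^{CAV}]$ is attained either at one of the finitely many breakpoints of $\mathbb{E}(Q^m)$ or at one of the finitely many points where $q_\tau$ meets a level $t_r$ (roughly $m=\tau((-\infty,t_r))$, $r=1,\dots,R$). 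Thus $iii)$ holds iff \eqref{eq_Qm} holds on this finite candidate list of $m$'s, and each such evaluation amounts only to sorting and summation.

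The routine parts are the identity $\mathbb{E}(\lambda^m)=\int_0^m q_\lambda$ and the breakpoint bookkeeping. The hard part — the only genuine obstacle — will be the convex-minus-convex analysis of $g$: one must verify carefully that between consecutive breakpoints of $q_Q$ the function $g$ is truly convex, so that its minimum there is pinned down by the two endpoints together with at most one interior crossing of $q_\tau$, while correctly handling atoms of $\tau$ and coincidences between a jump of $q_\tau$ and some $t_r$. Once this is in place the corollary follows.
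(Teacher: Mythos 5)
Your step (a) is exactly what the paper relies on: the corollary is stated without a separate proof because it is an immediate reading of the equivalence $i)\Leftrightarrow iii)$ in Theorem~\ref{Th_feasibility}, together with the observation that \eqref{eq_Qm} involves only $Q=\sum_r q_r\delta_{t_r}$, $\tau$, and the initial sections of Definition~\ref{def_InitSec} --- no $\nu$, $\mu$ or multi-day scheme. Your step (b) goes genuinely beyond the paper: reducing the continuum of inequalities \eqref{eq_Qm} to a finite candidate list of values of $m$ is a worthwhile sharpening of the word ``practical,'' and the argument is sound --- $m\mapsto\mathbb{E}(Q^m)$ is piecewise linear with breakpoints at the partial sums of the (travel-time-ordered) flows, $m\mapsto\mathbb{E}(\tau^m)$ is convex, so $g(m)=\mathbb{E}(\tau^m)-\mathbb{E}(Q^m)$ is convex between consecutive breakpoints and its minimum there is pinned down by the endpoints and at most one interior point where the quantile function of $\tau$ crosses the relevant level $t_r$. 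One correction to your parenthetical aside: the direction of the stochastic order is reversed. Since each driver's offer $T_i^{CAV}=\sum_r t_r^{CAV}\mu(i,r)$ is a barycenter of the route times, $Q$ is a mean-preserving spread of $\tau$; condition $iii)$ (with equality at $m=q^{CAV}$ forced by \eqref{eq_compatibility}) says that $\tau$ is dominated by $Q$ in the convex order, i.e.\ $Q$ majorizes $\tau$, not the other way around. This does not affect your main argument, but the aside should be fixed or dropped.
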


\begin{remark}
We can extend the notion of feasibility of an offer to offering travel times \emph{not exceeding} a given number $T_i^{CAV}$. With this definition, if $T^{CAV,A} \le T^{CAV,B}$ then if $T^{CAV,B}$ is feasible then also $T^{CAV,A}$ is feasible. 
\end{remark}

\begin{remark}
\label{Rem_feasibilityMixed}
Algorithm \ref{Alg_feasible} can be used to verify the feasibility of mixed routings with two or more different flow vectors $\bold{q^{CAV}}$. Namely, suppose $T^{CAV} = \sum_{m=1}^M p^m T^{CAV,m}$, where $\sum p^m = 1, p^m\ge 0$, are such that $T^{CAV,m}$ is an offer profile subject to $(\bold{q^{CAV,m}}, \bold{t^m})$. Then if $T^{CAV,m}$ is feasible for every $m$ then $T^{CAV}$ is also feasible with multi-day assignment scheme which can be obtained by applying multi-day assignment scheme corresponding to applying $q^{CAV,m}$ with probability $p^m$, every day.  
\end{remark}

\section{Fleet discount factors and optimal offer profiles}
In Sections \ref{Sec_OfferProfiles}-\ref{Sec_greedy} we studied the feasibility of given offer profiles and routings without delving into \emph {which} offer profile is the most convenient for the fleet operator. In this section, on the contrary, we discuss what offer profile may maximize market share, given a distribution of attitudes of drivers towards adoption of CAVs. 
We focus on the equilibrated full market share, i.e. we study the following problem, see (Q1) in Section \ref{Sec_ResQ}.
\\

\noindent{\bf Problem:} Is $u_i^{CAV} \le u_i^{HDV}$ possible for \emph{every} $i$ with some fixed feasible travel time offer $T_i^{CAV}$?

\noindent We assume that the utilities of using an HDV and CAV by driver $i$ are given by 
\begin{eqnarray}
u_i^{HDV} &=& t_{min}, \label{eq_ut1}\\
u_i^{CAV} &=& \gamma_i^F T_i^{CAV}, \label{eq_ut2}
\end{eqnarray}
where $t_{min}$ is the expected travel time of the fastest route and the discount factor $\gamma_i^F$ expresses the attitude (or relative value of time) of driver $i$ towards using CAVs. 

\begin{remark}
Utilities \eqref{eq_ut1}-\eqref{eq_ut2} typically contain random and other terms, see Appendix \ref{Sec_General}, however we disregard them here and focus on the leading term related to travel time and demonstrate the principal ensuing effects as a proof of concept. We also assume that in equilibrated settings, which we study, $t_{min}$ is known precisely to all drivers. 
\end{remark}

\noindent Let us now formally define discount factor profiles. 

\label{Sec_DiscountFactorProfiles}
\begin{definition}[{CAV discount factor profile}]
A \emph{CAV discount factor profile} is any measurable function $\gamma^F : I \to \mathbb{R}$, which relates the disutility $u_i$ experienced by driver $i$ when using a CAV to mean offered travel time by $u_i = \gamma^F_i T^{CAV}_i$.  
\end{definition}

\begin{remark}
\begin{enumerate}
\item[i)]
The default range of CAV discount factor profile is $[0, 1]$, which corresponds to decrease of travel time disutility due to using a CAV. However, there may exist users distasting/mistrusting the idea of CAV, for which $\gamma^F > 1$ (likely), and users who in fact \emph{benefit} from using a CAV to such a degree that $\gamma^F < 0$ (rather unlikely). The latter users should be routed via longest/most inconvenient routes. For brevity of arguments and calculations, however, we will disregard the possibility $\gamma^F \le 0$ and assume that the profile is qualitatively similar to the one in Fig. \ref{Fig_gammaF}, where $p(\gamma^F)$ is the probability density function of the distribution of $\gamma^F$, compare \cite{Correia}. 
\begin{figure}
\center
\includegraphics[scale=0.7]{"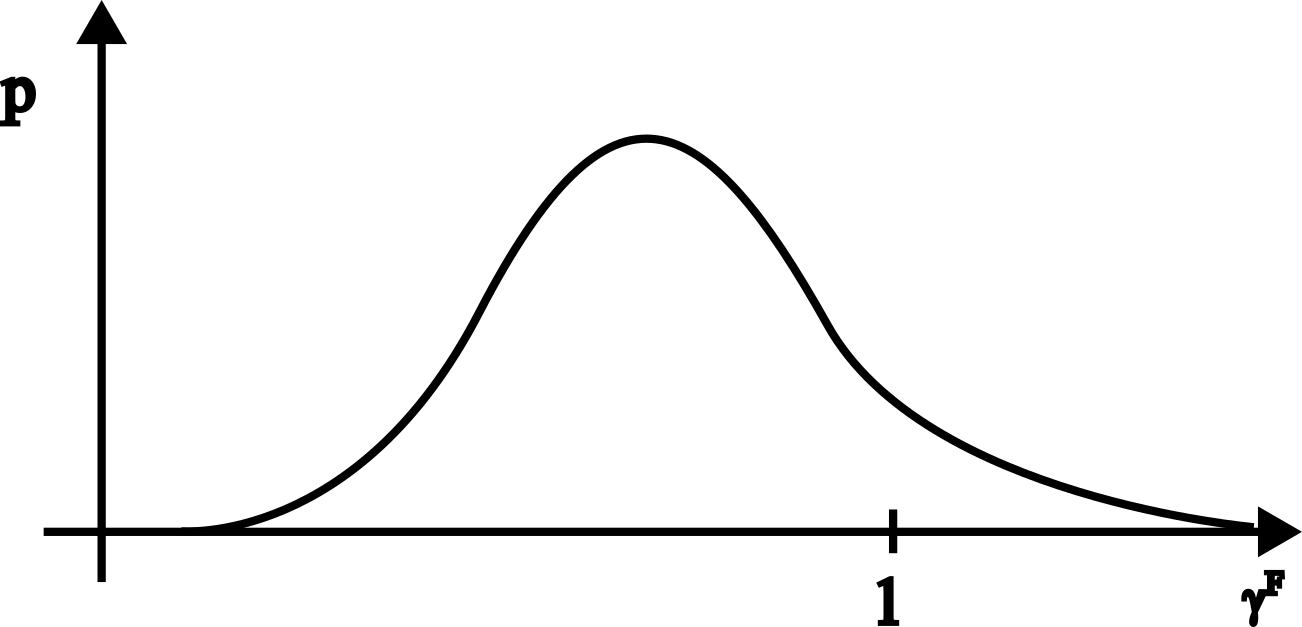"}
\caption{Typical assumed distribution of fleet discount factors in a population of drivers in a city. Most drivers have $\gamma^F<1$ which corresponds to lower value of time when using CAV and thus benefitting from using CAVs. $\gamma^F=1$ expresses indifference to using a CAV vs. HDV. Finally, some drivers, however, may distaste using CAVs which corresponds to the tail for $\gamma^F>1$.} \label{Fig_gammaF}
\end{figure}
\item[ii)] The linear dependence of utility on travel time in \eqref{eq_ut2} is an approximation and more generally one could consider $u_i^{CAV} = \gamma_i^F(T_i^{CAV})$ with $\gamma_i^F$ a function with non-constant slope depending on $T_i^{CAV}$.  This could encompass e.g. users which like long trips with a CAV but dislike short trips.  

\end{enumerate}
\end{remark}

\noindent The following result establishes simple conditions which relate the full market share achievability to the structure of routing $(\bold{q^{CAV}},\bold{t^{CAV}})$ and given discount factor profile $\gamma^F$. 

\begin{proposition}[Full market share necessary condition, deterministic routing]
\label{Prop_fullMS}
Let $\bold{q^{CAV}}$ be a fixed assignment of CAV flows to routes and $\bold{t^{CAV}} = (t_1^{CAV}, \dots, t_R^{CAV})$ the corresponding  travel time vector.  Let 
\begin{eqnarray}
t_{min}&:=& \min_r t_r^{CAV} \label{eq_tmin}\\
t_{max}&:=& \max_r t_r^{CAV} \label{eq_tmax}
\end{eqnarray}
be the minimum and maximum, respectively, travel time experienced on routes  and 
\begin{equation}
\overline{t^{CAV}} := \frac 1 {q^{CAV}} \sum_r q_r^{CAV}t_r^{CAV}
\label{eq_tcavmean}
\end{equation}
the mean CAV travel time, see \eqref{eq_compatibility}. Then 
\begin{enumerate}
\item [i)] \begin{equation}
\label{eq_necessary}
\overline{t^{CAV}} \le t_{min}\mathbb{E}(1/\gamma^F) 
\end{equation}
is a necessary condition for the existence of a feasible CAV offer profile subject to routing $(\bold{q^{CAV}},\bold{t^{CAV}})$ such that $u_i^{CAV}\le u_i^{HDV}$ for every $i$, meaning that no driver has an incentive to defect, i.e. switch from CAV to HDV. 
\item [ii)] If there are two routes only and $t_{min}/t_{max} \le \gamma_i^F \le 1$ for every user $i$, then \eqref{eq_necessary} is also a sufficient condition.
\end{enumerate}
\end{proposition}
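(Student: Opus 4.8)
The plan is to treat the two parts separately, each by a short direct argument: part (i) rests on the compatibility condition \eqref{eq_compatibility}, and part (ii) additionally invokes Proposition \ref{Prop_AssPlans_prop}(ii) (every two-route offer profile is feasible). For part (i) I would first rewrite the no-defection requirement pointwise. Suppose a feasible offer profile $T^{CAV}$ with $u_i^{CAV}\le u_i^{HDV}$ for (a.e.) $i$ exists; then by \eqref{eq_ut1}--\eqref{eq_ut2} we have $\gamma_i^F T_i^{CAV}\le t_{min}$, and since $\gamma_i^F>0$ this is the pointwise bound $T_i^{CAV}\le t_{min}/\gamma_i^F$. I would then integrate this over $I$, divide by $|I|=q^{CAV}$, and use \eqref{eq_compatibility} on the left to identify $\frac{1}{|I|}\int_I T_i^{CAV}\,di$ with $\overline{t^{CAV}}$; the right-hand side becomes $t_{min}\,\mathbb{E}(1/\gamma^F)$, which is exactly \eqref{eq_necessary}. (If $1/\gamma^F$ is not integrable the right side is $+\infty$ and there is nothing to prove.)

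For part (ii) the plan is constructive: under \eqref{eq_necessary} I would exhibit a feasible offer profile on which nobody defects. Order the two routes so that $t_1^{CAV}=t_{min}\le t_{max}=t_2^{CAV}$. Note that $\overline{t^{CAV}}$, being a convex combination of $t_{min}$ and $t_{max}$, lies in $[t_{min},t_{max}]$; and since $\gamma_i^F\le 1$ forces $1/\gamma_i^F\ge 1$, the quantity $t_{min}\,\mathbb{E}(1/\gamma^F)$ is $\ge t_{min}$, while \eqref{eq_necessary} bounds $\overline{t^{CAV}}$ from above by it. Hence $\overline{t^{CAV}}\in[t_{min},\,t_{min}\mathbb{E}(1/\gamma^F)]$. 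In the degenerate case $\overline{t^{CAV}}=t_{min}$ I would simply take $T_i^{CAV}\equiv t_{min}$; otherwise I would set $\lambda:=(\overline{t^{CAV}}-t_{min})/(t_{min}\mathbb{E}(1/\gamma^F)-t_{min})\in(0,1]$ and define $T_i^{CAV}:=(1-\lambda)t_{min}+\lambda\,t_{min}/\gamma_i^F$, which is measurable in $i$ because $\gamma^F$ is.

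It then remains to check three properties of this $T^{CAV}$, which I expect to be routine: (a) compatibility, since averaging gives $(1-\lambda)t_{min}+\lambda t_{min}\mathbb{E}(1/\gamma^F)=\overline{t^{CAV}}$ by the choice of $\lambda$; (b) range, since $1/\gamma_i^F\ge 1$ gives $T_i^{CAV}\ge t_{min}$, while $t_{min}\le t_{min}/\gamma_i^F$ together with $\gamma_i^F\ge t_{min}/t_{max}$ gives $T_i^{CAV}\le t_{min}/\gamma_i^F\le t_{max}$, so $T^{CAV}$ is a genuine offer profile and hence --- there being only two routes --- feasible by Proposition \ref{Prop_AssPlans_prop}(ii); and (c) no defection, since $t_{min}\le t_{min}/\gamma_i^F$ yields $T_i^{CAV}\le t_{min}/\gamma_i^F$, hence $u_i^{CAV}=\gamma_i^F T_i^{CAV}\le t_{min}=u_i^{HDV}$ for every $i$.

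The arguments are short; the only points that need genuine care are the bookkeeping of the two standing hypotheses on $\gamma^F$ --- the bound $\gamma_i^F\le 1$ is what reconciles the floor $T_i^{CAV}\ge t_{min}$ with the incentive ceiling $T_i^{CAV}\le t_{min}/\gamma_i^F$, and $\gamma_i^F\ge t_{min}/t_{max}$ is what keeps that ceiling at or below $t_{max}$ --- together with treating $\overline{t^{CAV}}=t_{min}$ separately so that $\lambda$ is well defined. The main conceptual obstacle, and the reason sufficiency is stated only for two routes, is that for $R\ge 3$ Proposition \ref{Prop_AssPlans_prop}(iii) shows an arbitrary offer profile need not be feasible, so the clean ``pick any profile with the right mean'' step genuinely breaks down and one would instead have to feed the constructed distribution into the feasibility criterion of Theorem \ref{Th_feasibility}.
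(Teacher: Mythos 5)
Your proposal is correct and follows essentially the same route as the paper: part (i) is the same pointwise bound $T_i^{CAV}\le t_{min}/\gamma_i^F$ integrated against \eqref{eq_compatibility}, and your interpolated profile $(1-\lambda)t_{min}+\lambda\, t_{min}/\gamma_i^F$ is exactly the paper's $\tilde{T}_i^{CAV}=t_{min}+\frac{1}{\alpha}(t_{min}/\gamma_i^F-t_{min})$ with $\lambda=1/\alpha$, followed by the same appeal to Proposition \ref{Prop_AssPlans_prop}(ii). Your explicit range check and separate treatment of the degenerate case $\overline{t^{CAV}}=t_{min}$ are minor refinements of bookkeeping the paper leaves implicit.
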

\begin{proof}
i) The travel time each driver $i$ can be offered so that it does not defect satisfies $T_i^{CAV} \le t_{min}/\gamma_i^F$ by \eqref{eq_ut1}-\eqref{eq_ut2}. Integrating/summing over all drivers $i$ and applying \eqref{eq_compatibility}, we obtain \eqref{eq_necessary}. 

\noindent ii) Suppose first that \eqref{eq_necessary} is an equality. Then offering $T_i^{CAV} = t_{min}/\gamma_i^F$ results in $u_i^{CAV} = t_{min} = u_i^{HDV}$ for every user, which is sufficient to prevent switching from CAV to HDV. This $T_i^{CAV}$ is a valid offer profile as it satisfies \eqref{eq_compatibility} and so, by Proposition \ref{Prop_AssPlans_prop}ii, it is feasible. If \eqref{eq_necessary} is a strict inequality, 
one can easily find a (nonunique) profile $\tilde{T}^{CAV}$ such that $$t_{min} \le \tilde{T}_i^{CAV} \le t_{min}/\gamma_i^F$$ for $i \in I$ and $\tilde{T}_i^{CAV}$ satisfies \eqref{eq_compatibility}, i.e. $\frac 1 {|I|} \int_I {\tilde T}_i^{CAV} di = \overline{t^{CAV}}.$ Such a profile can be constructed e.g. as follows. Let $\alpha>1$ be the unique number satisfying $$t_{min} + \alpha( \overline{t^{CAV}} - t_{min}) = t_{min}\mathbb{E}(1/\gamma^F).$$ Then $$\tilde{T}_i^{CAV} = t_{min} + \frac 1 \alpha (t_{min}/\gamma_i^F - t_{min})$$ satisfies $t_{min} \le \tilde{T}_i^{CAV} \le t_{max}$ and yields the desired offer profile satisfying \eqref{eq_compatibility} which is robust against  defections from CAV to HDV. 
\end{proof}

\begin{remark}
\begin{enumerate}
\item [i)] The factors $\gamma_i^F$ are generally not known exactly to the fleet operator a priori and should be estimated. Bayesian methods seem to be a practical way to do it as the prior distribution could potentially be reasonably guessed for each individual driver and drivers' behaviour in day-to-day changing conditions is likely to enable estimation of a tight posterior. Devising an optimal scheme for this is future work. 
\item[ii)] Condition \eqref{eq_necessary} prescribes the space of potentially available solutions for equilibrium full market share. In particular, if system optimal assignment satisfies it, the fleet could be able to offer system optimal assignment with no incentive to defect by any driver.
\end{enumerate} 
\end{remark}

\begin{remark}
\label{Rem_gammasmall}
\noindent Proposition \ref{Prop_fullMS}ii requires $t_{min}/t_{max} \le \gamma_i^F \le 1$. This is to ensure that the offered travel time $T_i^{CAV} = t_{min}/\gamma_i^F$ is within the range $[t_{min},t_{max}]$. 
This condition can, however, be relaxed to  $\gamma_i^F \le 1$ by assigning drivers with $\gamma_i^F < t_{min}/t_{max}$ to longest routes \emph{before} verifying \eqref{eq_necessary} as follows. For two routes and $\gamma_i^F \le 1$ we write $I = I_1 \cup I_2$ where $I_1 = \{i: \gamma_i^F < t_{min}/t_{max}\}$ and $I_2 = \{i: t_{min}/t_{max} \le \gamma_i^F \le 1 \}$. Denote $r_{max} = \arg\max_r t_r^{CAV}, r_{min} = \arg\min_r t_r^{CAV}$ and $q_{max} = q_{r_{max}}, q_{min}=q_{r_{min}}$. There are two cases: 
\begin{itemize}
\item  $|I_1| \ge q_{max}$. Then assigning all drivers from  $I_2$ to $r_{min}$ (i.e. taking $\mu(i,r) = \delta_{rr_{min}}$, where $\delta$ is the Kronecker delta) and those from $I_2$ arbitrarily to the remaining slots so that the total assigned to routes is given by $\bold{q^{CAV}}$, yields an assignment plan and hence, by Proposition \ref{Prop_AssPlans_prop}i a feasible offer profile. 
\item $|I_1| < q_{max}$. Then considering $\bold{\tilde{q}^{CAV}} = (\tilde{q}_1^{CAV} - |I_1|,\tilde{q}_2^{CAV})$ if $r_{max} = 1$ or $\bold{\tilde{q}^{CAV}} = (\tilde{q}_1^{CAV},\tilde{q}_2^{CAV} - |I_1|)$ if $r_{max} = 2$    we obtain that a feasible offer profile exists if and only if \eqref{eq_necessary} is satisfied for $(\bold{\tilde{q}^{CAV}}, \bold{t^{CAV}})$ and driver space $I_2$. 
\end{itemize}

\end{remark}

\noindent The procedure described in Remark \ref{Rem_gammasmall} can be generalised to arbitrary number of routes using Algorithm \ref{Alg_feasible} (Algorithm FEASIBLE).

\begin{proposition}[Full market share sufficient condition, general case]
For an arbitrary number of parallel routes and a given discount factor profile $\gamma^F$ there exists a greedy algorithm verifying whether a couple $(\bold{q^{CAV}}, \bold{t^{CAV}})$ admits a feasible CAV travel time offer profile such that no driver has an incentive to defect to HDV ($u_i^{CAV} \le u_i^{HDV}$ for every $i$).
\end{proposition}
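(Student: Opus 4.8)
The plan is to reduce the question to the feasibility theory of Sections \ref{Sec_OfferProfiles}--\ref{Sec_greedy}. By \eqref{eq_ut1}--\eqref{eq_ut2} the condition $u_i^{CAV}\le u_i^{HDV}$ is equivalent to $T_i^{CAV}\le t_{min}/\gamma_i^F=:b_i$; and since every offer profile automatically takes values in $[t_{min},t_{max}]$, this is equivalent to $T_i^{CAV}\le \tilde b_i:=\min(b_i,t_{max})$. Thus the question becomes: \emph{is there a feasible offer profile subject to $(\bold{q^{CAV}},\bold{t^{CAV}})$ that is dominated pointwise by the bound profile $\tilde b$?} Two cheap preliminary tests dispose of the degenerate cases. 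First, if $\gamma_i^F>1$ on a set of positive measure, then $\tilde b_i<t_{min}$ there, which no offer profile can satisfy, so return FALSE. Second, if $\frac{1}{|I|}\int_I \tilde b_i\,di<\overline{t^{CAV}}$, then the compatibility condition \eqref{eq_compatibility} cannot be met by any profile $\le\tilde b$, so return FALSE; this is the sharp form of the necessary condition \eqref{eq_necessary}, and it automatically subsumes the special treatment of small discount factors from Remark \ref{Rem_gammasmall} (a driver with $\gamma_i^F<t_{min}/t_{max}$ simply has $\tilde b_i=t_{max}$, needing no separate route reduction).

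The core of the argument is to single out \emph{one} candidate offer profile whose feasibility is equivalent to the feasibility of the whole relaxed problem. Define the water-filling profile $T^\sharp_i:=\min(\tilde b_i,\beta)$, where $\beta\in[t_{min},t_{max}]$ is the unique level with $\frac{1}{|I|}\int_I \min(\tilde b_i,\beta)\,di=\overline{t^{CAV}}$; existence and uniqueness are immediate, since the left-hand side is continuous and nondecreasing in $\beta$, equals $t_{min}$ at $\beta=t_{min}$, and equals $\frac{1}{|I|}\int_I \tilde b_i\,di\ge\overline{t^{CAV}}$ at $\beta=t_{max}$ by the second preliminary test. Then $T^\sharp$ is a genuine offer profile: it takes values in $[t_{min},t_{max}]$ and satisfies \eqref{eq_compatibility} by the choice of $\beta$. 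I would then prove the key Lemma: \emph{$T^\sharp$ is majorized by every offer profile $T$ with $T\le\tilde b$ pointwise and $\frac{1}{|I|}\int_I T_i\,di=\overline{t^{CAV}}$}; equivalently, writing $\tau^\sharp,\tau$ for the induced distributions, $\mathbb{E}((\tau^\sharp)^m)\ge\mathbb{E}(\tau^m)$ for all $m\in[0,q^{CAV}]$, i.e. $T^\sharp$ has the largest possible initial-section means. Granting this, Theorem \ref{Th_feasibility}(iii) --- feasibility of a profile with distribution $\tau$ is exactly $\mathbb{E}(Q^m)\le\mathbb{E}(\tau^m)$ for all $m$ --- shows that if \emph{any} admissible $T$ is feasible then $T^\sharp$ is feasible; the converse is trivial, $T^\sharp$ being itself admissible. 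Hence the sought offer profile exists if and only if $T^\sharp$ is feasible, which by Theorem \ref{Th_feasibility} is decided by $FEASIBLE(\bold{q^{CAV}},\bold{t^{CAV}},\tau^\sharp)$, where $\tau^\sharp$ is the push-forward of the discount-factor distribution under $\gamma\mapsto\min(t_{min}/\gamma,\beta)$. The algorithm is therefore: run the two preliminary tests, compute $\beta$, form $\tau^\sharp$, and call $FEASIBLE$; it inherits greediness from Algorithm \ref{Alg_feasible}.

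It remains to prove the Lemma, and I expect this to be the main obstacle. I would argue it via the convex-test characterization of majorization: fix a convex $\phi$ and a subgradient $s\in\partial\phi(\beta)$, and split $I=A\cup B$ with $A=\{i:\tilde b_i<\beta\}$, on which $T^\sharp_i=\tilde b_i\ge T_i$, and $B=\{i:\tilde b_i\ge\beta\}$, on which $T^\sharp_i=\beta$. The common mean forces $\int_A(\tilde b_i-T_i)\,di=\int_B(T_i-\beta)\,di=:D\ge 0$. Convexity gives $\phi(\tilde b_i)-\phi(T_i)\le s\,(\tilde b_i-T_i)$ on $A$ (because $T_i\le\tilde b_i<\beta$, so $\phi'\le s$ on the interval) and the plain subgradient inequality $\phi(T_i)-\phi(\beta)\ge s\,(T_i-\beta)$ on $B$; integrating and using the mean identity yields $\int_I\phi(T^\sharp_i)\,di\le\int_I\phi(T_i)\,di$ irrespective of the sign of $s$. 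The only genuinely delicate points are to run this uniformly in both the atomic and the continuous ($I=[0,1]$ with Lebesgue measure) settings --- in particular handling an atom of the discount-factor distribution exactly at the level $\beta$ and a non-differentiable $\phi$ via one-sided derivatives --- together with the routine bookkeeping that $\tau^\sharp$ is the correct push-forward and that passing from $b$ to $\tilde b$ does not change the answer (a driver with $\gamma_i^F<t_{min}/t_{max}$ never defects since $\gamma_i^F t_{max}<t_{min}$). Everything downstream is a direct invocation of Theorem \ref{Th_feasibility} and Algorithm \ref{Alg_feasible}.
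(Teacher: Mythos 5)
Your proposal is correct, but it takes a genuinely different route from the paper's proof. The paper also starts from the capped bound profile $T_i^{CAV}=\min(t_{min}/\gamma_i^F,t_{max})$ and splits into the three cases $\mathbb{E}(T^{CAV})<,=,>\ \overline{t^{CAV}}$: the first is ruled out by the necessary condition of Proposition \ref{Prop_fullMS}, the second is handed directly to Algorithm \ref{Alg_feasible}, and the third is handled by \emph{modifying} Algorithm \ref{Alg_feasible} so that it accepts the surplus mass above $t_N$ and assigns it to the still-available routes --- with the correctness of that modified algorithm explicitly left to the reader. You instead collapse the third case into the second: you deflate the bound profile to a single canonical offer profile $T^\sharp=\min(\tilde b,\beta)$ with exactly the right mean via water-filling, prove that $T^\sharp$ is the majorization-minimal admissible profile (so that by the initial-section criterion of Theorem \ref{Th_feasibility}(iii) it is feasible as soon as \emph{any} admissible profile is), and then invoke the \emph{unmodified} Algorithm \ref{Alg_feasible} on $\tau^\sharp$. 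What your approach buys is a self-contained correctness proof --- the extremality lemma, whose convex-test argument you sketch correctly, replaces the unproven claim about the modified algorithm --- at the cost of having to establish the (classical, but not in the paper) water-filling/majorization lemma and the Hardy--Littlewood--P\'olya equivalence between the convex order and the initial-section-mean comparison. Your two preliminary tests ($\gamma_i^F>1$ on a set of positive measure, and the integrated bound falling below $\overline{t^{CAV}}$) reproduce, respectively, what the paper handles through the $\tau((-\infty,t_1))>0$ branch of the algorithm and through the first bullet of its case split, and your observation that capping at $t_{max}$ subsumes Remark \ref{Rem_gammasmall} matches the paper's choice of $T_i^{CAV}$. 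One minor point: the level $\beta$ need not be unique when $\tilde b$ is essentially bounded below $t_{max}$, though the resulting profile $T^\sharp$ is; this does not affect the argument.
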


\begin{proof}
Put $T_i^{CAV} = \min\left(t_{min}/\gamma_i^F, t_{max}\right)$, where $t_{min}, t_{max}$ are defined in \eqref{eq_tmin}-\eqref{eq_tmax} and let $\overline{t^{CAV}}$ be defined as in \eqref{eq_tcavmean}. If 
\begin{itemize}
\item  $\mathbb{E}(T_i^{CAV}) < \overline{t^{CAV}} $ then there is no offer profile subject to $(\bold{q^{CAV}}, \bold{t^{CAV}})$ with no incentive to defect by Proposition \ref{Prop_fullMS}. 
\item $\mathbb{E}(T_i^{CAV}) = \overline{t^{CAV}} $ then use greedy Algorithm \ref{Alg_feasible} or Theorem \ref{Th_feasibility} to decide.
\item $\mathbb{E}(T_i^{CAV}) > \overline{t^{CAV}} $  
then modify Algorithm \ref{Alg_feasible} as follows. Algorithm $\ref{Alg_feasible}$ returns (TRUE, $\nu=0$) if both $\tau((-\infty,t_1))=0$ and $\tau([t_1,t_N])=0$. This relies on the fact that these two equalities imply that also $\tau((t_N,\infty)) = 0$ by the invariants $\sum_{k=1}^K q_kt_k = \int_{\mathbb{R}} t d\tau(t)$ and $\sum_{k=1}^K q_k = \tau(\mathbb{R})$, which in turn require $\mathbb{E}(T_i^{CAV}) = \overline{t^{CAV}}$. To obtain an algorithm that works for $\mathbb{E}(T_i^{CAV}) > \overline{t^{CAV}}$, we need to modify Algorithm \ref{Alg_feasible} by returning (TRUE, $\nu = \sum_k q_k \delta_{t_k}$) if $\tau((-\infty,t_1))=0$ and $\tau([t_1,t_N])=0$, bearing in mind that this part of $\nu$ assigns the drivers corresponding to $\tau|_{(t_N,\infty)}$ to the still available routes, when reconstructing $\mu$. We leave the (not difficult) proof that this modified algorithm returns (TRUE, $\nu$) if and only if there exists a feasible offer profile $\tilde{T}^{CAV} \le T^{CAV}$ subject to $(\bold{q^{CAV}}, \bold{t^{CAV}})$ to the reader.

\end{itemize} 

\end{proof}

\section{Experimental results}
\label{Sec_illustrative}

In this section we present a series of four illustrative scenarios, which address the following questions, see (Q1), (Q3) in Section \ref{Sec_ResQ}.
\begin{enumerate}
\item Can the fleet operator achieve full market penetration under various constraints on available routings?
\item What network conditions result from  market share maximization strategies? 
\end{enumerate}

\noindent We assume, following Section \ref{Sec_DiscountFactorProfiles}, that every driver $i$ is characterized by a fleet discount factor $\gamma_i^F$, which corresponds to the relative value of time of using a CAV as compared to using an HDV. The disutilities of using a CAV and using an HDV can then be expressed by \eqref{eq_ut1}-\eqref{eq_ut2}. 
Full market penetration is, in full CAV market share equilibrium equivalent to every driver $i$ being committed to using CAV, i.e. $u_i^{CAV} \le u_i^{HDV}$ for every $i$, which is equivalent to

\begin{equation}
\label{eq_onlyCAV}
\gamma_i^F T_i^{CAV} \le t_{min}
\end{equation}
for every $i$, see Section \ref{Sec_DiscountFactorProfiles}. Solving this inequality for various distributions $\gamma^F$ and assignments of CAV flows to routes $\bold{q^{CAV}}$ we distinguish four cases:
\begin{itemize}
\item In Section \ref{ex_1} we discuss symmetric offer profiles where every driver is offered the same mean travel time, compare Example \ref{Ex_sym}.  
\item Section \ref{ex_2} discusses a deterministic routing with travel time offer profile individually tailored to drivers, see Proposition \ref{Prop_fullMS}.
\item Section \ref{ex_3} demonstrates that mixed routings, see Remark \ref{Rem_feasibilityMixed}, are strictly superior to deterministic routings and shows the practical implications thereof.
\item In Section \ref{ex_4} we show how the full market share can be achieved dynamically starting from HDV-only Wardrop User Equilibrium, by application of a mixture of deterministic and mixed stochastic CAV strategies (routings). 
\end{itemize}

Let us point out that accross the four scenarios we use simple affine delay functions to model travel time on routes, which however do not restrict the generality of our considerations, as the only property that we use is the fact that they are strictly increasing, which is also the case for the more standard quartic BPR \cite{BPR} functions. In fact, we conjecture that the same effects will be present in more realistic simulations and even in monotonicity-violating real traffic systems.

\subsection{Symmetric offer profile}
\label{ex_1}
If $\bold{q^{CAV}}$ corresponds to system optimum and all drivers are offered the same mean travel time $T_i^{CAV} = \overline{T^{CAV,SO}}$ then  \eqref{eq_onlyCAV} yields $$\gamma_i^F \le T^{CAV,SO}_{min}/\overline{T^{CAV,SO}},$$ where $T^{CAV,SO}_{min}$ is the travel time of the fastest route at System Optimum. Satisfaction of this inequality heavily dependens on the structure of system optimal flows. Consider, namely, a system with two routes $A, B$ such that $t_A = 1 + 2q_A$, $t_B = 2+q_B$ and continuous routing of CAVs ${\bf q^{CAV}} = (q_A, q_B)$ such that  $q_A + q_B = 1$. Let $I = [0,1]$ be the indices of drivers. Simple computations yield:
\begin{itemize}
\item Wardrop Equilibrium: $q_A = \frac 2 3, q_B = \frac 1 3$, $t_A = t_B = {2 \frac 1 3}$. 
\item System Optimum: $q_A = q_B = \frac 1 2, t_A = {\bf 2}, t_B = {\bf 2 \frac 1 2}$. 
\item  $\overline{T^{CAV,SO}} = {\bf 2 \frac 1 4}$, $T^{CAV,SO}_{min} = {\bf 2}$, $T^{CAV,SO}_{min}/\overline{T^{CAV,SO}} = {\bf 8/9}.$
\end{itemize}
Consequently, if $\gamma_i^F < 8/9$ then for $T_i^{CAV} = 2\frac 1 4$ for every $i \in I$ no driver has an incentive to defect to using HDV and driving via the faster route $A$. We note that this condition on $\gamma^F$ is rather restrictive and one can usually not expect $\gamma^F$ to be bounded by a constant less than $1$ in a real system. We note also that the affine route delay functions were chosen for simplicity and any increasing delay function, e.g. BPR, would produce qualitatively similar outcomes.

\subsection{Tailored offer profile}

In Section \ref{ex_1} we assumed that CAVs offer the same mean travel time to \emph{all} the drivers. However, the travel time offers can be tailored to individual drivers provided the discount factors are known to the fleet controller. The next scenario shows how the system optimal routing can overcome the restrictive bound on $\gamma^F$ by individualizing offers, see Proposition \ref{Prop_fullMS} for a general result. 

\label{ex_2}
Consider the same road network as in Section \ref{ex_1}. Let $\gamma_i^F = 1.0$ for $50\%$ of drivers and $\gamma_i^F \le 0.8$ for the remaining drivers. Offering the drivers with $\gamma^F = 1$ always route $A$ and the drivers with $\gamma^F \le 0.8$ always route $B$ results in system optimal flows on routes $q_A = q_B = 0.5$. Moreover, no CAV user has an incentive to defect to HDV as the drivers on route $A$ are already using always the faster route and for them $u_i^{CAV} = t_{min} = u_i^{HDV}$ while the remaining drivers have $u_i^{HDV} = 2$ and $u_i^{CAV} \le 0.8 * 2.5 = 2.0 \le u_i^{HDV}$.  Finally, we note that average $1/\gamma^F$ is estimated by $$\mathbb{E}(1/\gamma^F) \ge 0.5 * (1/1.0) + 0.5*(1/0.8) = 1/2 + 5/8 = 9/8 =  \overline{T^{CAV,SO}}/T^{CAV,SO}_{min},$$ which reflects Proposition \ref{Prop_fullMS}.

Hence, the system optimal or any other fixed routing may make users not defect to HDV even for $\gamma^F$ up to $1$. Nevertheless, whether this is possible or not depends on the distribution of $\gamma^F$ and properties of the road system. And, clearly, drivers with $\gamma^F > 1$ cannot be convinced to remain part of the fleet of CAVs as routing them via the faster alternative still yields $u_i^{CAV}>u_i^{HDV}$. Can the fleet do any better? Section \ref{ex_3} shows that in many cases there exists a routing such that even users with $\gamma^F > 1$ have no incentive to defect to HDV. The existence of this routing relies on stripping the HDVs of the knowledge which route is the fastest on any given day. 

\subsection{Mixed routing}
\label{ex_3}
Assume there are two equivalent routes $A$ and $B$ with delay functions $t_A(q) = t_B(q) = 1 + q$. Suppose that $10\%$ of drivers have a discount factor $\gamma^F = 1.3$ and $90\%$ of drivers have $\gamma^F = 0.7$. Then no deterministic (even individually tailored) routing and travel time offer will ever convince the drivers with $\gamma^F =1.3$ to use a CAV as these drivers can simply defect and use the faster route. Suppose, however, we admit mixed routings of the form 
$$\bold{q^{CAV}} = \begin{bmatrix}
(q_A^1, q_B^1) & p\\
(q_A^2, q_B^2) & 1-p
\end{bmatrix},$$
which means that route flows $(q_A^1,q_B^1)$ are applied with probability $p$ and route flows $(q_A^2,q_B^2)$ with probability $1-p$ for $0\le p \le 1$. With this notation, consider routing $$\bold{q^{CAV}} = \begin{bmatrix}
(0.9, 0.1) & 0.5\\
(0.1, 0.9) & 0.5
\end{bmatrix},$$
i.e. $90\%$ of vehicles routed via A and $10\%$ via B with probability $0.5$ and $90\%$ of vehicles routed via B and $10\%$ via A with probability $0.5$, see Fig. \ref{Fig_MixedRout}. Moreover, assume that drivers with $\gamma^F = 1.3$ are offered to always be routed via the less congested alternative. For these drivers we have $u_i^{CAV} = 1.3(1+0.1) = 1.43$. The remaining drivers are always routed via the congested alternative and  have $u_i^{CAV} = 0.7(1+0.9) = 1.33$. The disutilities of using an HDV are, for all users, given by \emph{expected} utilities as HDVs have no access to information which route will be congested on which day, 
which can be computed as follows. Both routes have $50\%$ probability of having travel times $1.9$ and $50\%$ probability of having travel time $1.1$ and so $t_{min}$ is the expected travel time via any of them, given by $u_i^{HDV} =  t_{min} = 0.5*1.9 + 0.5*1.1 = 1.5 $. As for every $i$ we have $u_i^{CAV} < u_i^{HDV}$, this mixed routing with CAVs only is robust against defections to HDV. Note that this mixed routing is costlier to the fleet controller than the system optimal routing. Nevertheless, the system optimal routing splitting the flow equally (or any other deterministic routing), will never convince the reluctant group to use a CAV. 

\begin{figure}
\centering
\includegraphics[scale=0.6]{"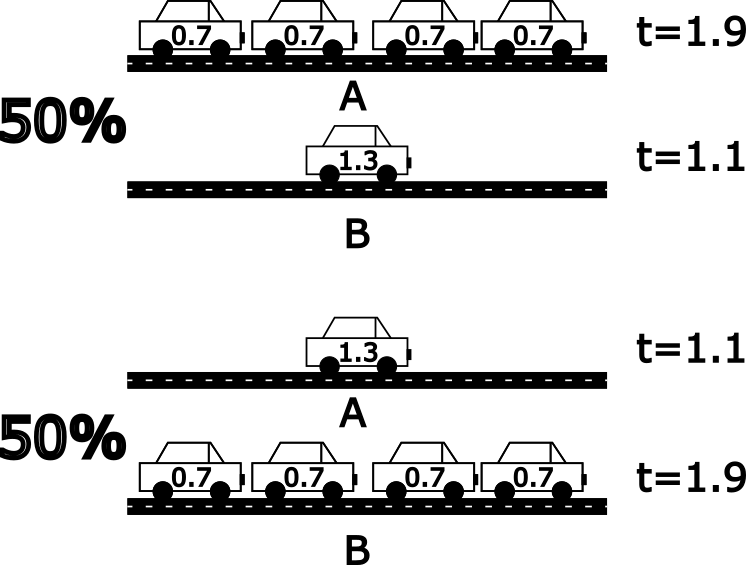"}
\caption{Mixed CAV routing in a system with two equivalent routes and a human driver population where $90\%$ are enthusiastic towards AVs (discount factor $0.7$) and $10\%$ dislike AVs (discount factor $1.3$). Routing the reluctant users always via the uncongested alternative prevents them from defecting to using HDV, however only when HDVs have no information which route will be congested on a given day. This can be achieved by a mixed routing such that there is $50\%$ probability that $90\%$ of vehicles are routed via the upper alternative and $10\%$ via the lower alternative and $50\%$ probability that $10\%$ of vehicles are routed via the upper alternative and $90\%$ via the lower alternative. This results in HDV utilities being equal to the expected travel time on any of the routes, as choosing the faster route on a specific day is now unavailable due to lack of information which was available when the driver was part of the CAV fleet.} 
\label{Fig_MixedRout}
\end{figure}

\begin{openproblem}
One may ask whether routing made up of exactly two strategies is the most cost-efficient way of increasing the expected travel times on routes for HDVs. A strong indication that this is indeed so when delay functions are strictly convex is Proposition 50 in \cite{JamrozIFA}. The general case with heterogenous human population is left for further research.  
\end{openproblem}

\subsection{Dynamic routing}
\label{ex_4}
In Section \ref{ex_3} we discussed the stability of $100\%$ CAV market share via mixed routing. Here, we show how the $100\%$ market share could be achieved in the dynamics setting. 

\begin{enumerate}[leftmargin = 1.5cm]
\item[Stage 0:] The system, composed of HDVs only, is in Wardrop Equilibrium. The drivers exhibit different attitudes to CAVs with $90\%$ enthusiastic, $\gamma_i^F = 0.7$, and $10\%$ reluctant with $\gamma_i^F = 1.3$. At some point the fleet operator enters the market. 
\item [Stage 1:] Fleet offers deterministic routing which mimics the choices of HDVs. All drivers with $\gamma^F<1$ are expected to join sooner or later, obtaining greater utility (lower disutility). However, all the remaining drivers are expected to stick to HDVs, resulting in $90\%$ market share. 
\item [Stage 2:] Fleet routes its $90\%$ market share by a Stackelberg strategy:
$\bold{q^{CAV}} = \begin{bmatrix}
(0.9, 0.0) & 0.5\\
(0.0, 0.9) & 0.5
\end{bmatrix}.$
After some time needed for adaptation the remaining drivers split $(0.05, 0.05)$ achieving User Equilibrium conditional on the fleet routing, with equal expected travel times on both routes. The drivers in the fleet still have no incentive to defect as their disutilities are given by  $u_i^{CAV} = 0.7*1.95 = 1.365$, while $u_i^{HDV} = 0.5(1.05 + 1.95) = 1.5$.
\item [Stage 3:] Fleet offers travel time not worse than $1.1$ to remaining HDVs with $\gamma^F = 1.3$ resulting in offered disutility not worse than $1.43$. Gradually, all the remaining HDVs migrate to the fleet, obtaining initial disutility $1.3* 1.05 = 1.365$ which gradually grows to $1.43$. Simultaneously, the remaing drivers' CAV utility decreases from $1.365$ to $1.33$, while HDV utility of all drivers remains constant equal $1.5$.  
\item [Stage 4:] Having $100\%$ market share, the fleet can optimize its cost by solving the cost optimization problem in the space of individualized mixed routings with the constraint that no one has an incentive to defect to HDV. 
\end{enumerate}
Let us remark that this case study neglects the temporal aspect of the evolution, which must be based on realistic CAV vs. HDV switching models, compare Section \ref{Sec_switching} and requires further research. Similarly, the solution to the optimization problem in Stage 4, remains open. 

\subsection{Summary}
To summarize the considered scenarios we note that in general CAVs have a multiobjective optimization problem at hand, i.e. on the one hand they aim to increase the market share, on the other however, they want to reduce their (average) cost. If all users have $\gamma_i^F = 1$ then the only way that CAVs can achieve full market penetration with deterministic strategies at equilibrium is to route according to Wardrop User Equilibrium. Using system optimal routing is bound to fail, unless e.g. people can be convinced that defection from the routing scheme means return to User Equilibrium as in \cite{Hoffmann}.  Now, suppose that at equilibrium CAVs propose individually tailored routing which 
achieves average travel time $\overline{T^{CAV}}$, 
however this is not enough to achieve full market penetration. The reasons can be as follows:
\begin{enumerate}
\item [i)] Even though $\overline{T^{CAV}} \le T^{CAV}_{min} \mathbb{E}(1/\gamma^F)$ for $(\bold{q^{CAV}},\bold{t^{CAV}})$, the offer profile cannot be realized as an assignment scheme, see Theorem \ref{Th_feasibility}. 
\item[ii)] $\overline{T^{CAV}} > T^{CAV}_{min} \mathbb{E}(1/\gamma^F)$ for given (e.g. system optimal) $(\bold{q^{CAV}},\bold{t^{CAV}})$. Then perhaps a different assignment, e.g. constrained system optimal in place of system optimal, should be considered. 
\item[iii)] $\overline{T^{CAV}} > T^{CAV}_{min} \mathbb{E}(1/\gamma^F)$ for \emph{any} proposed $100\%$ market share fleet assignment. Then any deterministic routing will fail and mixed routings should be considered. However, this case occurs only if some drivers have $\gamma_i^F > 1$, since otherwise the Wardrop UE can serve as a routing, for which defection to HDV does not improve travel disutility. 
\item[iv)] The mean travel times offered to users have been figured out wrongly (e.g. fleet controller misestimated $\gamma_i^F$ of users) so that even though $\overline{T^{CAV}} \le T^{CAV}_{min} \mathbb{E}(1/\gamma^F)$ holds and the offer profile is feasible for the estimated $\gamma^F$, some users are offered travel times rendering their CAV disutilities higher than HDV disutilities ($u_i^{CAV} > u_i^{HDV}$) if the actual $\gamma^F$ is used. 
\item[v)] $\gamma_i^F \ge 1$ for some users $i$ and mixed routing may be necessary to increase the market share.
\item[vi)] Market share cannot be $100\%$ even if mixed routings are considered. Then perhaps dynamic mixed routing, which is out of scope of this paper, could be considered. 
\item[vii)] The human behaviour model \eqref{eq_ut1}-\eqref{eq_ut2} is not accurate enough. 

\end{enumerate}

\section{Discussion}
\label{Sec_Discussion}
In this paper we:
\begin{itemize}

\item Developed a mathematical framework for studying CAV fleet individualized routing offers and assignment plans and showed how they can be implemented as actual assignment of drivers to routes (Section \ref{Sec_OfferProfiles}).

\item Proposed a greedy algorithm (Algorithm \ref{Alg_feasible}) and a simplified criterion (Theorem \ref{Th_feasibility}) for verifying whether a given offer profile is feasible. The proposed algorithm is constructive and it returns an assignment plan if the offer profile is feasible. We proved that the algorithm is correct and always stops (Section \ref{Sec_greedy}).   

\item Provided necessary and sufficient conditions for the existence of $100\%$ market share routing of CAVs when drivers exhibit heterogeneous attitudes towards adoptions of CAVs (Section \ref{Sec_DiscountFactorProfiles}).  

\item Demonstrated that highly uncertain travel times on routes distributed as in the right panel of Fig. \ref{Fig_traveltime}, may become common if there is a group CAV player maximizing market share via routing (Section \ref{Sec_illustrative}). Such distribution of travel times, resulting from mixed fleet strategies, translates to high day-to-day variability and unpredictability of travel times bringing about deterioration of driving conditions which is likely to be unacceptable to citizens. Some of the consequences of this atypical bimodal  distribution, such as dramatic schedule shifts of time critical trips, are highlighted in Appendix \ref{Sec_General}. 

\end{itemize}

The results provide strong evidence that collective routing in a free CAV market may be detrimental to the society and policy-makers are urged to bear this risk in mind and potentially regulate the market. One tool which may be useful to achieve it could be collective routing detection addressed in \cite{JamrozIFA}.

This paper opens a new area of research and there is a myriad of directions to follow up. Below we list the most interesting theoretical and practical issues on top of those mentioned in the text, the future research should focus on: 
\begin{itemize}
\item Developing behavioural models of adoption of CAVs and switching between HDV and CAV based on extension of discount factor idea. Developing sound behavioural models of route choice under very high bimodal day-to-day uncertainty.
   
\item Developing efficient routing algorithms for CAV operators, in particular efficient estimation of fleet discount factors.  This includes developing algorithms for optimal deterministic fleet routing using the feasibility-checking algorithm in Section \ref{Sec_greedy} and 
developing algorithms for optimal mixed Stackelberg routings.
\item Developing algorithms for optimizing the traffic (or fleet cost) with $100\%$ market share of CAVs, i.e. finding the least costly mixed strategy (if necessary) which still makes users not defect to HDV. 

\item Developing a general game-theoretic setting of games of one large variable-size player vs. many small players in the context of urban traffic routing games.  
\item Characterizing the Fleet-Human Equilibria in the general case. 
\item Including the dependence of travel time on the departure time and building a framework for inclusion of within-day rescheduling.
\item Extending all the results to systems with routes which are not parallel. 
\end{itemize}

\section{Acknowledgement}
This work was supported by the European Union within the Horizon Europe Framework Programme (ERC Starting Grant COeXISTENCE no. 101075838). Views and opinions expressed are however those of the authors only and do not necessarily reflect those of the European Union or the European Research Council Executive Agency. Neither the European Union nor the granting authority can be held responsible for them.

\appendix

\section{More general model formulation and schedule shift vs. uncertainty of travel times}
\label{Sec_General}
In this section we formulate a general model of CAV/HDV disutility in the fleet-independent driving choice context and show that mixed routing results, unlike in traditional day-to-day dynamics contexts, in dramatic schedule shifts of human drivers. 

To formulate the model, suppose there is one OD pair and $R$ parallel routes. Assume that $q = q^{HDV} + q^{CAV}$ is the total flow/number of vehicles, with $q^{CAV} = \alpha q$ fleet vehicles and $q^{HDV} = (1-\alpha)q$ human drivers. The fraction (market share) $\alpha$ is not fixed but can change depending on the utilities to the drivers of using an HDV or CAV. Assume that human drivers' value of time is equal $\beta_i$ and human drivers have discount factors $\gamma_i^F>0$ which scale the time-based disutility of using a CAV, see e.g. \cite{Correia}, by 
\begin{equation}
\label{eq_disutilityCAV}
u_i^{CAV} = u_i^{CAV, 0} + \gamma_i^F \beta_i T_i^{CAV} + u_i^{CAV, risk},
\end{equation}
while the disutility of using an HDV is given by
\begin{equation}
\label{eq_disutilityHDV}
u_i^{HDV} = u_i^{HDV,0} + \beta_i T_i^{HDV} + \epsilon_{ir(i)} + u_{ir(i)}^{HDV, risk}.
\end{equation}
Above, we assume that:
\begin{itemize}
\item The monetary cost of travel is proportional to travel time and incorporated into $\beta_i$ or negligible. 
\item $T_i^{CAV}$ is the average expected travel time the fleet of CAVs offers user $i$ on the given OD. 
\item $r(i)$ is the route HDV $i$ is dedicated to use in the near future. 
\item  $T_i^{HDV}$ is the long-term travel time of the most convenient route $r(i)$, as predicted/expected by driver $i$.  In more complex future scenarios, which we omit in the presentation below, $T_i^{HDV}$ along with $\epsilon_{ir(i)}$ and $u_{ir(i)}^{HDV, risk}$ may be the expectation over different routes if an HDV user predicts having to change routes regularly. Furthermore, $T_i^{HDV}$ may potentially be considerably misjudged based on previous experience resulting in self-confirming or bayesian equilibria. 

\item $\epsilon_{ir(i)}$ is the subjective preference related to route $r(i)$, as perceived by traveler $i$. 
\item $u_i^{CAV, 0}, u_i^{HDV, 0}$ are specific constants related to using HDV or CAV in general (e.g. prestige), and in the following they will be set to $0$, for simplicity. 
\item $u_{ir(i)}^{HDV, risk}$ is the term accounting for additional costs of travelling out of planned schedule, which we expect to be considerable, see below.
\item $u_i^{CAV, risk}$ is a risk term for using a CAV stemming from different route times. We will assume, for simplicity, that it vanishes, as the CAV is likely to be able to replace the office for most office-workers and variations of commute travel time may become negligible. 
\end{itemize}

The discount factors, $\gamma_i^F$ are assumed to be distributed according to a given probability measure $p$ on $[0,\infty)$. 
Behaviourally, the bulk of discount factors will be in the interval $[0.5, 1]$ which corresponds to a slight to moderate reduction of disutility when using a CAV compared to HDV, see \cite{Correia}, compare \cite{Harrison} for other aspects of CAV uptake.

\subsection{Example: Model with risk term}
In this section we discuss the outcomes of a simple yet realistic model of departure times adjustment by drivers facing high uncertainty. We assume the following disutilities:
\begin{eqnarray}
u_i^{CAV} &=& \gamma_i^F T_i^{CAV} \label{eq_disCAVmod2}\\
u_i^{HDV} &=& \min_r \left\{\mathbb{E}[T_{r}] + u_{ir}^{HDV, risk}\right\}\label{eq_disHDVmod2}
\end{eqnarray}
where
\begin{equation}
u_{ir}^{HDV,risk} = \min_{\rho} \mathbb{E} \left\{\theta_{LAP}  [T_r - \rho]^+ + \theta_{EAP} [\rho - T_r]^+ \right\}.
\label{eq_disHDVmod2Risk}
\end{equation}
Equation \eqref{eq_disCAVmod2} assumes that CAV users are insensitive to travel time variation and derive their disutility only from the average travel time offered to them. 
On the other hand, in \eqref{eq_disHDVmod2} we assume that human driven vehicles' disutilities are based on the route which is best (expressed as minimization over $r$ in \eqref{eq_disHDVmod2}) in terms of the sum of average travel time and variability. The variability is included via \eqref{eq_disHDVmod2Risk}, which expresses the process of departure time choice on every hypothetical alternative. It amounts to estimating the optimal amount of time $\rho$ to set off before the desired arrival time, see e.g. \cite{Noland, FosgerauKarlstrom}, compare \cite{Hollander} for the alternative approach using directly the variability (e.g. standard deviation or inter-quantile distance) in place of out-of-schedule penalties. We note that: 
\begin{itemize}
\item $T_r$ is assumed to be independent of $\rho$, which is a considerable simplification and does not hold in general. 

\item If $T_r$ is deterministic (constant), i.e. $T_r \thicksim \delta_{\mathbb{E}[T_r]}$, then setting $\rho = \mathbb{E}[T_r]$ we obtain $u_{ir}^{HDV,risk}=0$, which corresponds to departure such that arrival is always exactly on time and involves no risk. 
\item In general, there is a (perhaps non-unique) optimal $\rho$ which balances early arrival risk with late arrival risk and adds uncertainty-related cost of using an HDV on top of the cost based on average travel time. 
\item We assume that drivers are cognitively capable of estimating their $u_i^{HDV}$, however this is not essential, as they could also be using a dedicated app, which would do this comptuation for them if provided with value-of-time factors $\theta_{LAP}$ and $\theta_{EAP}$. 
\end{itemize}

\noindent In the following example we examine the potential consequences of bimodal day-to-day travel times distribution on routes, Fig. \ref{Fig_traveltime}, caused by mixed routing of the CAV fleet, see Section \ref{ex_3}. We discover that facing highly variable two-point day-to-day travel time distributions, drivers are likely to significantly shift their departure times (if they still want to use HDV), which is not the case for highly disruptive rare events.  

\begin{example}[Two-point travel time distribution]
\label{Ex_twopoint}
Assume $\theta_{LAP} = 2$, $\theta_{EAP} = 1$ in \eqref{eq_disHDVmod2Risk}, which seem to be reasonable choices, compare \cite{WatlingLAPUE}, and consider the following scenarios:

\begin{enumerate}
\item[i)] $T_r \thicksim 0.5 \delta_{1.1} + 0.5 \delta_{1.9}$ ($50\%$ probability of travel time $1.1$ and $50\%$ probability of travel time $1.9$) for $r=1,2$, which correspond to travel times resulting from mixed routing in Section \ref{ex_3}. Then, as both routes are exactly the same, and noting that optimal $\rho$ is in the interval $[1.1,1.9]$ we obtain, by \eqref{eq_disHDVmod2}-\eqref{eq_disHDVmod2Risk},
\begin{eqnarray*}
u_i^{HDV} = \mathbb{E}[T_r] + u_{ir}^{HDV,risk} &=& 1.5 + \min_{\rho} \mathbb{E} \left\{2  [T_r - \rho]^+ + [\rho - T_r]^+ \right\}\\ &=& 1.5 + 0.5\min_{\rho} \left\{2(1.9 - \rho) + (\rho - 1.1) \right\} = 1.5 + 0.4 = 1.9.
\end{eqnarray*}
Consequently, the optimal $\rho$ equals $1.9$ resulting in HDV disutility $1.9$, which means that, facing high uncertainty, it is best to significantly change (compared to departing the average travel time before the desired arrival) the schedule so as to never be late as {\bf being late on $50\%$ of days is not feasible,} see Fig. \ref{Fig_HumPersp}. If the early arrival value of time is equal $1$ (i.e. $\theta_{EAP}=1$) then the resulting disutility corresponds to the longest possible day-to-day travel time on routes ($1.9$ in our example). 

\begin{figure}[h!]
\centering
\includegraphics[scale=0.5]{"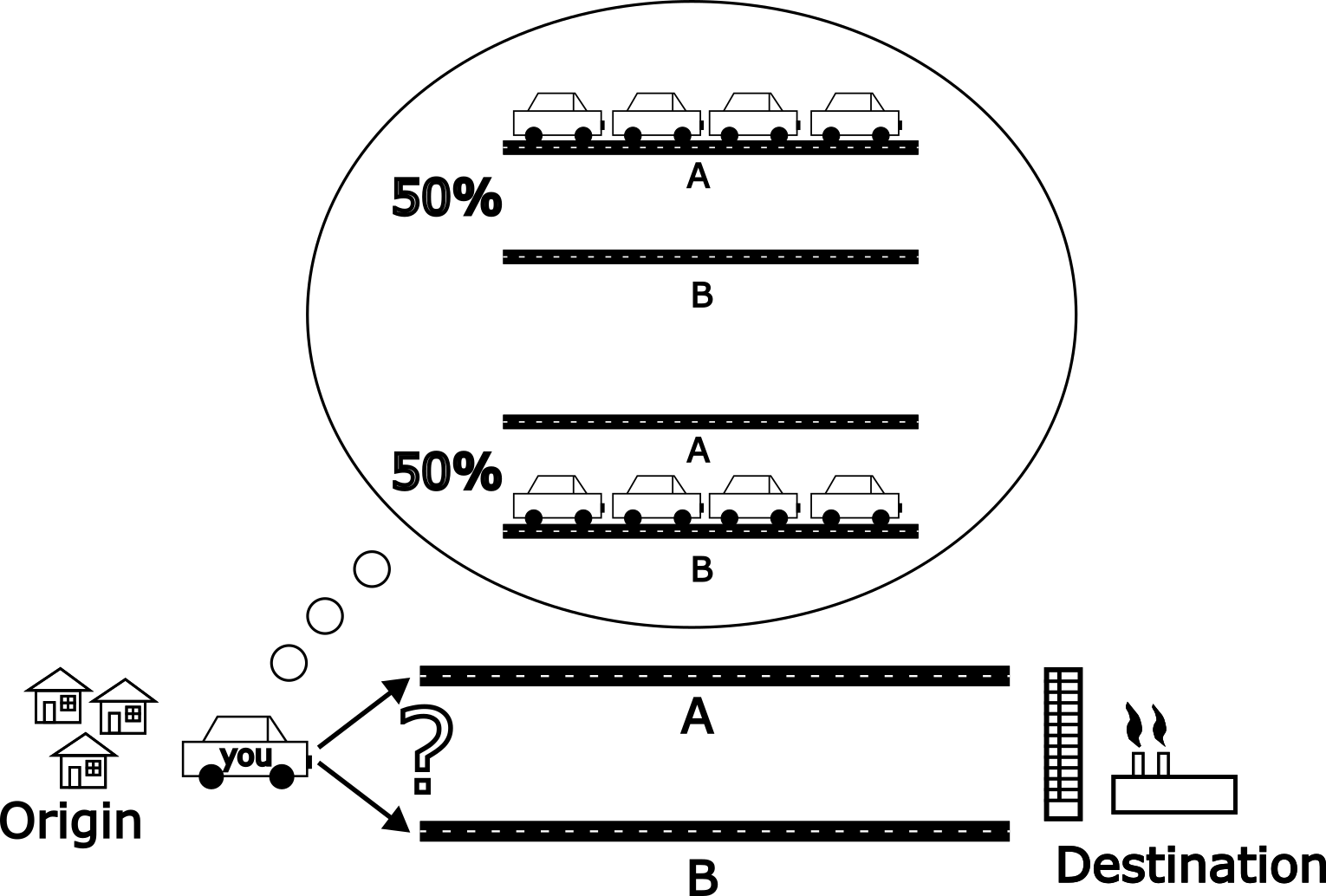"}
\caption{A human driver facing the route and departure time choice with high uncertainty of travel times, which assume only two possible considerably different values. We conjecture, based on the reasonable model we studied, that people will tend to dramatically shift their schedules so as to never arrive late. This, however comes at a cost of 'early arrival penalty' $50\%$ of time and so significantly increases their disutility of travelling. If this disutility is too high, they may want to switch to CAVs offering better experience, even if they are reluctant to adopt this new technology. }
\label{Fig_HumPersp}
\end{figure}

\item[ii)] The dramatic schedule shift does not occur when the risk is smaller. If $T_r \thicksim 0.9 \delta_{1.1} + 0.1 \delta_{1.9}$ then the optimal $\rho$ is equal $1.1$ resulting in disutuility $1.66$. 
\item[iii)] In general, for $T_r \thicksim (1-p)\delta_{T_{min}} + p\delta_{T_{max}}$, $r=1,2,\dots,R$ and  assuming the HDV utility given by \eqref{eq_disHDVmod2}-\eqref{eq_disHDVmod2Risk} we obtain 
\begin{equation*}
u_{ir}^{HDV,risk} = \min_{\rho}\left\{p\theta_{LAP} (T_{max} - \rho) + (1-p)\theta_{EAP} (\rho-T_{min}) \right\}
\end{equation*}
which is minimized for $\rho = T_{max}$ if $p\theta_{LAP} > (1-p)\theta_{EAP}$ and for $\rho = T_{min}$ otherwise. Consequently if for a given driver $$\theta_{LAP}/\theta_{EAP} > (1-p)/p$$ then the driver will set off $T_{max}$ before the desired arrival time and if $$\theta_{LAP}/\theta_{EAP} < (1-p)/p$$ then they will set off $T_{min}$ before the desired arrival time. In the corner case $\theta_{LAP}/\theta_{EAP} = (1-p)/p$ any $\rho \in [T_{min},T_{max}]$ will result in the same disutility. For the default $\theta_{LAP} = 2$, $\theta_{EAP} = 1$ we obtain  a {\bf threshold} probability $p = 1/3$ which results in drivers switching from setting off $T_{min}$ to $T_{max}$ before the desired arrival time. Note that this threshold probability is relatively high and therefore the effect of {\bf significant schedule change only occurs for high probability of disruptive events} and is specific to scenarios when some actor (like CAV fleet operator) deliberately introduces high variability of travel times. It does not occur in standard scenarios with low risk of disruptive events and drivers facing linear late arrival penalty, reasonable for late arrival at most kinds of workplaces.  
\end{enumerate}
\end{example}

Example \ref{Ex_twopoint} discussed the schedule change for drivers facing high uncertainty in the case of (typical for us in this paper) two point travel time distribution introduced deliberately by the fleet of CAVs. Proposition \ref{Prop_risk} discusses the more general situtation. 

\begin{figure}[h!]
\label{Fig_rhomin}
\centering
\includegraphics[scale=0.7]{"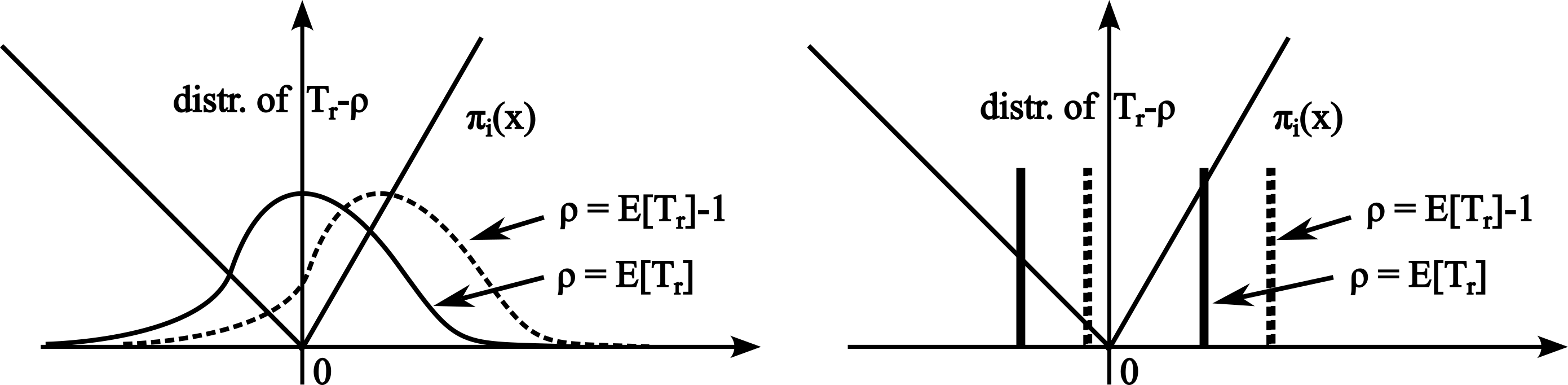"}
\caption{Visualization of function $\pi_i(x) = 2|x|^+ + |x|^-$ and distribution of $T_r - \rho$ in the continuous(left) and discrete(right) case for two values of $\rho$. The minimization $\min_{\rho} \mathbb{E}[\pi_i(T_r - \rho)]$ consists (in the continuous case) in finding the offset $\rho$ such that the integral $\int g(x + \rho)\pi_i(x)dx$ is minimized, where $g$ is the density of distribution of $T_r$ and hence $g(\cdot + \rho)$ is the density of distribution of $T_r - \rho$. If $T_r$ has a discrete distribution $w_1 \delta_{t_1} + w_2 \delta_{t_2}$ (in the right panel we have $w_1=w_2 = 0.5$) then the minimization consists in minimizing the weighted sum of the values of $\pi_i$, i.e. $\min_{\rho} \sum w_k \pi_i(t_k - \rho).$} 
\end{figure}

\begin{proposition}[General risk - schedule shift relation]
\label{Prop_risk}
Suppose the utility of HDV is given by
\begin{equation*}
u_i^{HDV} = \min_r \left\{\min_{\rho}\mathbb{E}[T_{r} + \pi_i(T_r-\rho)] \right\} = \min_r \left\{\mathbb{E}[T_r] +  \min_{\rho}\mathbb{E}[\pi_i(T_r-\rho)] \right\}
\end{equation*}
where $\pi_i$ is a convex non-negative out-of-schedule-arrival penalty function for driver $i$ satisfying $\pi_i(0)=0$ and $T_r$ is a distribution of travel times on route $r$ with bouned support. Then:
\begin{enumerate}
\item[i)] If $\pi_i$ is continuously differentiable and strictly convex, there exists a unique $\rho$ minimizing $\mathbb{E}[\pi_i(T_r-\rho)]$.
\item[ii)] If $\pi_i$ is not strictly convex then the minimizers may form an interval. 
\end{enumerate}
\end{proposition}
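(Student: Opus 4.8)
The plan is to observe that, for each fixed route $r$, the map $f_r(\rho) := \mathbb{E}[\pi_i(T_r-\rho)]$ is a convex function of the single real variable $\rho$, and then to extract (i) and (ii) from elementary one-dimensional convex analysis. First I would check that $f_r$ is finite and continuous: a finite convex function on $\mathbb{R}$ (such as $\pi_i$) is continuous, hence bounded on compact sets, and since $T_r$ has bounded support the random variable $\pi_i(T_r-\rho)$ is bounded for each $\rho$, so the expectation is well defined and $f_r$ is finite, continuous and convex — convexity being inherited from $\pi_i$ precomposed with the affine map $\rho\mapsto t-\rho$ and then averaged over $t\sim T_r$. In particular $\min_\rho \mathbb{E}[\pi_i(T_r-\rho)]$ is the minimum of a convex function of $\rho$, whose minimiser set is automatically a convex subset of $\mathbb{R}$ — empty, a single point, or an interval — and the two parts of the proposition decide which.

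For part (i) I would first upgrade convexity to strict convexity of $f_r$. Given $\rho_1\neq\rho_2$ and $\lambda\in(0,1)$, for every $t$ one has $t-\rho_1\neq t-\rho_2$, so strict convexity of $\pi_i$ gives the strict pointwise inequality $\pi_i\big(\lambda(t-\rho_1)+(1-\lambda)(t-\rho_2)\big) < \lambda\pi_i(t-\rho_1)+(1-\lambda)\pi_i(t-\rho_2)$; since this holds on the non-empty support of $T_r$ and the integrands are bounded, taking expectations preserves strictness, so $f_r$ is strictly convex and has at most one minimiser. Existence then follows from coercivity: because $\pi_i\ge 0$, $\pi_i(0)=0$ and $\pi_i$ is strictly convex, $0$ is the global minimum of $\pi_i$ and $\pi_i(x)\to+\infty$ as $|x|\to\infty$, so if $\mathrm{supp}\,T_r\subset[a,b]$ then $f_r(\rho)\ge \inf_{t\in[a,b]}\pi_i(t-\rho)\to+\infty$ as $|\rho|\to\infty$; a coercive continuous function on $\mathbb{R}$ attains its infimum. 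Combining existence and uniqueness gives the unique optimal offset for route $r$, and minimising the finitely many numbers $\mathbb{E}[T_r]+\min_\rho f_r(\rho)$ over $r$ produces $u_i^{HDV}$. If one additionally wants to characterise the minimiser, differentiation under the integral sign — legitimate because $\pi_i'$ is bounded on the relevant compact set — gives $f_r\in C^1$ with $f_r'(\rho)=-\mathbb{E}[\pi_i'(T_r-\rho)]$, so the optimal $\rho$ is the unique root of $\mathbb{E}[\pi_i'(T_r-\rho)]=0$.

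For part (ii) it is enough to exhibit one instance where the minimiser set is a non-degenerate interval, and the corner case already worked out in Example \ref{Ex_twopoint}(iii) does precisely that. Take the piecewise-linear penalty $\pi_i(x)=\theta_{LAP}[x]^++\theta_{EAP}[-x]^+$, which is convex and non-negative with $\pi_i(0)=0$ but not strictly convex (it has a kink at $0$), and the two-point travel-time law $T_r\sim(1-p)\delta_{T_{min}}+p\,\delta_{T_{max}}$ with parameters tuned so that $\theta_{LAP}/\theta_{EAP}=(1-p)/p$. Then for $\rho\in[T_{min},T_{max}]$ one computes $\mathbb{E}[\pi_i(T_r-\rho)]=p\,\theta_{LAP}(T_{max}-\rho)+(1-p)\theta_{EAP}(\rho-T_{min})$, which is constant in $\rho$ on the whole interval $[T_{min},T_{max}]$ and strictly larger outside it, so the set of minimisers is exactly $[T_{min},T_{max}]$.

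Most of this is routine single-variable convex analysis; the one place that needs genuine care is the existence claim in (i), where both $\pi_i(0)=0$ together with $\pi_i\ge0$ (to ensure $0$ really is a minimum of $\pi_i$ rather than $\pi_i$ being, say, strictly decreasing) and strict convexity (to force growth at infinity from there) must be used, and then the bounded support of $T_r$ is what transfers that growth to $f_r$. The only other subtlety is the passage from a pointwise strict inequality to a strict inequality between expectations, which relies on the support of $T_r$ being non-empty and the integrand bounded — both guaranteed by the hypotheses.
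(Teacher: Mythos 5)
Your proof is correct and follows essentially the same route as the paper: part (i) is the same one-dimensional convex-analysis argument (the paper phrases it via the strictly increasing derivative $\Pi'(\rho)=\mathbb{E}[\pi_i'(\rho-T_r)]$ changing sign, you via strict convexity of the expectation plus coercivity, and you also note the derivative characterisation), and part (ii) invokes exactly the corner case of Example~\ref{Ex_twopoint}iii) that the paper cites. If anything, your write-up is more careful than the paper's, since you justify the growth of $\pi_i$ at infinity and the transfer of strictness to the expectation, which the paper dispatches with ``Clearly''.
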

\begin{proof}
i) Denoting $\Pi(\rho):= \mathbb{E}[\pi_i(\rho-T_r)]$ we obtain that 
\begin{equation*}
\Pi'(\rho) = \mathbb{E}[\pi_i'(\rho-T_r)]
\end{equation*}
is strictly increasing. Clearly, $\Pi'(\rho)<0$ for $\rho$ small enough and $\Pi'(\rho)>0$ for $\rho$ large enough. Consequently, there exists a unique $\rho$ such that $\Pi'(\rho) = 0$, and $\rho$ is the minimizer of $\Pi$.  ii) Example \ref{Ex_twopoint}iii). 

\end{proof}

Returning to disutilities \eqref{eq_disCAVmod2}-\eqref{eq_disHDVmod2} we note that the disutility of using a CAV {\bf does not} include a risk term. This is due to the fact that, every day, driver $i$ is picked up at the departure time (varying day-to-day) such that he/she arrives at the destination exactly on time. Consequently, using an HDV involves not only unpredictability of travel times but also schedule adjustment, which 
makes using HDVs even more inconvenient compared to simple disutilities given by \eqref{eq_ut1}-\eqref{eq_ut2}. Note however, that this extra penalty plays a role {\bf only when the fleet of CAV uses mixed routing}. 

Finally, let us shortly dwell on the scenario when there is exogenous uncertainty involved. This uncertainty cannot be predicted by either HDVs or the fleet operator and comes in as additonal smoothing of travel time distribution. To be more precise, let $\phi_r^{\epsilon} \ge 0$ be small kernels accounting for small variations of travel time via different routes (note that here we exclude extreme events) suported in $[-\epsilon_r,\epsilon_r]$ such that $\int_{\mathbb{R}} \phi^{\epsilon_r}(x)dx = 1$. Then the travel times distribution is given by  
\begin{equation*}
T_r^{\epsilon_r} := T_r * \phi^{\epsilon_r}
\end{equation*}
where $*$ denotes convolution, and in the scenario considered in Example \ref{Ex_twopoint} resembles the right panel in Fig. \ref{Fig_traveltime}. Both HDVs and CAVs have to bear this additional risk, and the modified model \eqref{eq_disCAVmod2}-\eqref{eq_disHDVmod2Risk} may now look as follows:
 
\begin{eqnarray}
u_i^{CAV} &=& \gamma_i^F T_i^{CAV} + u_i^{CAV,risk} \label{eq_disCAVmod5}\\
u_i^{HDV} &=& \min_r \left\{\mathbb{E}[T_{r}] + u_{ir}^{HDV, risk}\right\}\label{eq_disHDVmod5}
\end{eqnarray}
where
\begin{eqnarray}
u_{ir}^{HDV,risk} &=& \min_{\rho} \mathbb{E} \left\{\theta_{LAP}  [T_r^{\epsilon_r} - \rho]^+ + \theta_{EAP} [\rho - T_r^{\epsilon_r}]^+ \right\},
\label{eq_disHDVmod5Risk}\\
u_i^{CAV,risk} &=& \sum_r \mu(i,r) \min_{\rho_r} \int_{\mathbb{R}}\left\{\theta_{LAP}  [\phi^{\epsilon_r}(x) - \rho_r]^+ + \theta_{EAP} [\rho_r - \phi^{\epsilon_r}(x)]^+ \right\}dx,
\label{eq_disCAVmod5Risk}
\end{eqnarray}
with $\mu(i,r)$ the proportion with which driver $i$ is routed via route $r$, see Section \ref{Sec_OfferProfiles}. Note that if all the routes have the same uncertainties $\phi^{\epsilon_r} = \phi^{\epsilon}$ then \eqref{eq_disCAVmod5Risk} reduces to 
\begin{equation*}
u_i^{CAV,risk} = \min_{\rho} \left\{\theta_{LAP}  [\phi^{\epsilon} - \rho]^+ + \theta_{EAP} [\rho - \phi^{\epsilon}]^+ \right\},
\end{equation*}
which is just a constant. We leave the analysis to further research, noting that other parts of utility, such as personal preferences or mode specific constants can be also incorporated into the model as per \eqref{eq_disutilityCAV}-\eqref{eq_disutilityHDV}. 
The proper inclusion of these and other aspects such as extreme disruptive events and dependence of $T_r$ on $\rho$ are left as an open problem.

\section{Routing via no more than two routes}
\label{Sec_proofs}
\begin{proposition}[2RMAX simplified assignment plan]
\label{Prop_2RMAX}
Let $\mu$ be an assignment plan generating the offer profile $T^{CAV}$. Let $\nu$ be the corresponding distribution on the unit simplex $\Delta^{R-1}$. Then there exists a (nonunique in general) 2RMAX assignment plan $\mu^R \sim \nu^R$ inducing the same travel time offer profile distribution $\tau$ such that $\nu^R$ is supported on $\{(q_1,\dots,q_R): \sum_{r=1}^R \bold{1}_{q_r > 0} \le 2\}$, i.e. on the set corresponding to routings with no more than two routes used. $\mu^R$ can be supported on an augmented space, which splits every (atomic or infinitesimal) driver $i$ into finitely many parts. Assignment $\mu$ can be recovered from $\mu^R$ by recombining split drivers $i$. 
\end{proposition}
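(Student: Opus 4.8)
The plan is geometric. For each driver $i$, condition \eqref{APmarg2} places the vector $\mathbf{a}_i := (\mu(i,1),\dots,\mu(i,R))$ in the simplex $\Delta^{R-1}$, while \eqref{APtime} places it on the hyperplane $H_i := \{\mathbf{q} : \mathbf{q}\cdot\mathbf{t^{CAV}} = T_i^{CAV}\}$; hence $\mathbf{a}_i$ lies in the polytope $P_i := \Delta^{R-1}\cap H_i$, which is nonempty since $T_i^{CAV}\in[\min_r t_r^{CAV},\max_r t_r^{CAV}]$. The crucial point is that \emph{every vertex of $P_i$ has at most two nonzero coordinates}: at a vertex, among the constraints $\{\sum_r q_r=1,\ \mathbf{q}\cdot\mathbf{t^{CAV}}=T_i^{CAV},\ q_1\ge 0,\dots,q_R\ge 0\}$ there must be $R$ that are active and linearly independent, and since the two equalities supply only two of them, at least $R-2$ of the sign constraints must be active. (If all $t_r^{CAV}$ coincide the statement is trivial -- split each driver by coordinates -- so we may assume $\dim P_i\le R-2$.) By Carath\'eodory's theorem applied to the bounded polytope $P_i$, we may write $\mathbf{a}_i=\sum_{k=1}^{R-1}c_k(i)\,\mathbf{b}^k(i)$ with $c_k(i)\ge 0$, $\sum_k c_k(i)=1$, and each $\mathbf{b}^k(i)$ a vertex of $P_i$ -- thus supported on at most two routes and satisfying $\mathbf{b}^k(i)\cdot\mathbf{t^{CAV}}=T_i^{CAV}$ -- padding with zero weights so that exactly $R-1$ terms appear.

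Assuming this decomposition can be chosen measurably in $i$ (the technical point, addressed below), I form the augmented driver space $\tilde I := I\times\{1,\dots,R-1\}$, give the ``part'' $(i,k)$ mass $c_k(i)\,di$, and define $\mu^R\big((i,k),r\big):=\big[\mathbf{b}^k(i)\big]_r$. Verifying \eqref{APmarg1}--\eqref{APtime} for $\mu^R$ on $\tilde I$ is then routine: $\sum_r\mu^R((i,k),r)=1$ since $\mathbf{b}^k(i)\in\Delta^{R-1}$; $\sum_r t_r^{CAV}\mu^R((i,k),r)=\mathbf{b}^k(i)\cdot\mathbf{t^{CAV}}=T_i^{CAV}$ by construction; and $\int_{\tilde I}\mu^R(\tilde i,r)\,d\tilde i=\int_I\sum_k c_k(i)[\mathbf{b}^k(i)]_r\,di=\int_I[\mathbf{a}_i]_r\,di=q_r^{CAV}$. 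Because every part $(i,k)$ is offered exactly $T_i^{CAV}$, the induced offer-profile distribution of $\mu^R$ is again $\tau$ (the mass of parts with offer in a Borel set $B$ is $\int_{\{i:\,T_i^{CAV}\in B\}}\big(\sum_k c_k(i)\big)di=\tau(B)$), and by construction $\mu^R\sim\nu^R$ with $\nu^R$ supported on $\{\mathbf{b}^k(i)\}\subset\{\mathbf{q}\in\Delta^{R-1}:\sum_r\mathbf{1}_{q_r>0}\le 2\}$. Finally, $\mu$ is recovered through the recombination map $(i,k)\mapsto i$: the $c_k(i)$-weighted average of $\mu^R((i,\cdot),\cdot)$ over the fibre above $i$ equals $\sum_k c_k(i)\mathbf{b}^k(i)=\mathbf{a}_i=(\mu(i,\cdot))$. (In the atomic case $\tilde I$ carries a weighted rather than counting measure, which is harmless since assignment plans are defined over arbitrary measure spaces.)

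The main obstacle is the measurability of $i\mapsto(c_k(i),\mathbf{b}^k(i))_k$. I would argue that for each pair $\{r,r'\}$ the candidate vertex -- the unique point of the edge $[\mathbf{e}_r,\mathbf{e}_{r'}]$ on $H_i$, when it exists -- depends in a piecewise-rational, hence Borel, way on $T_i^{CAV}$, so that writing $\mathbf{a}_i$ as a convex combination of these finitely many candidates is a feasible linear program with Borel data; selecting its lexicographically minimal basic feasible solution gives a Borel (indeed semi-algebraic) choice of $(c_k(i),\mathbf{b}^k(i))_k$. Alternatively, since the offer profile is by hypothesis induced by $\mu$ and hence feasible, Theorem~\ref{Th_feasibility} gives $\nu^R$ at once: $\mathrm{FEASIBLE}(\mathbf{q^{CAV}},\mathbf{t^{CAV}},\tau)$ returns $(\mathrm{TRUE},\nu^R)$ and, by inspection of Algorithm~\ref{Alg_feasible}, $\nu^R$ is supported on two-route routings; one then reconstructs $\mu^R$ from $\nu^R$ on the augmented space exactly as in the $ii)\to i)$ step of that theorem's proof, after merging routes of equal travel time and re-splitting the merged usages at the end.
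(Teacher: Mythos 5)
Your proof is correct, and it reaches the conclusion by a genuinely different mechanism from the paper's. The difference lies in how each driver's routing vector $\mathbf{a}_i=(\mu(i,1),\dots,\mu(i,R))$ gets decomposed into two-route pieces: the paper runs an explicit greedy procedure (Algorithm \ref{Alg_2rmax}) that repeatedly pairs the currently fastest and slowest used routes, peels off the largest mass that can sit on that pair while preserving the mean $T_i^{CAV}$, and recurses on the remaining coordinates; you instead observe that $\mathbf{a}_i$ lies in the polytope $P_i=\Delta^{R-1}\cap\{\mathbf{q}:\mathbf{q}\cdot\mathbf{t^{CAV}}=T_i^{CAV}\}$, whose vertices have at most two nonzero coordinates by the standard active-constraint count, and invoke Carath\'eodory to write $\mathbf{a}_i$ as a convex combination of at most $R-1$ such vertices. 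Both arguments then perform the same augmented-space bookkeeping (weighting the parts of driver $i$ by the convex coefficients) and both recover $\mu$ by averaging over the fibre above $i$. Your route is shorter, more standard, and yields the explicit bound of $R-1$ parts per driver at the cost of being existential; the paper's is constructive and directly implementable, and it handles your measurability worry only implicitly (the 2RMAX output is a piecewise-rational, hence Borel, function of $(\mathbf{a}_i,\mathbf{t^{CAV}})$ --- essentially the same fix as your lexicographically minimal basic-feasible-solution selection). One caution on your fallback: reconstructing $\nu^R$ from Theorem \ref{Th_feasibility}/Algorithm \ref{Alg_feasible} produces \emph{some} two-route plan inducing $\tau$, but not one that refines the given $\mu$, so that variant would forfeit the final claim that $\mu$ is recovered by recombining split drivers; your primary Carath\'eodory argument is the one to keep.
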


\begin{proof}[Proof of Propositon \ref{Prop_2RMAX}]
For every $\bold{q}$ in the simplex $\Delta^{R-1}$ the function 2RMAX(1,R,$\bold{q}$,$\bold{t}$) defined in Algorithm \ref{Alg_2rmax} returns a finite collection of 2RMAX routings $\{\alpha^{r_1r_2}(\bold{q}) \delta_{\bold{q}^{r_1r_2}(\bold{q})}\}$ 
where $\bold{q}^{r_1r_2}(q)$ has non-zero entries at positions $r_1,r_2$ where each couple $r_1r_2$ is used at most once, $r_1 = r_2$ means that there is only one non-zero entry and most of the coefficients $\alpha^{r_1r_2}$ vanish. Let $I \times \{1,\dots,R\} \times \{1,\dots,R\}$ be the new augmented sample space with measure $\tilde{di}(A) = \int_{I}\sum_{(r_1,r_2)} \alpha^{r_1r_2}(\bold{q}(i)) \bold{1}_{(i,r_1,r_2) \in A} di$, for $A \subset I \times \{1,\dots,R\} \times \{1,\dots,R\}$ and $\mu^R(i,r_1,r_2,r) = \bold{q}^{r_1,r_2}(\bold{q}(i))_r$.
 We note that $\mu^R$ induces the same distribution $\tau$ as $\mu$ and the corresponding $\nu^R$ is supported on $\bold{q}$ with at most two non-zero coordinates.  Note also that Algorithm \ref{Alg_2rmax} is correct and always stops as every recursive call preserves the invariant $\overline{t}$ and every $NewEntry$ is a measure supported on $\{\bold{q}\}$ such that $\bold{q}\cdot \bold{t} = \overline{t}$.  
\end{proof}
\begin{algorithm}
\caption{2RMAX($r_{min}$, $r_{max}$, $\bold{c}$, $\bold{t}$) }\label{Alg_2rmax}
\begin{algorithmic}
\State $\overline{t}\gets \frac {1}{|\bold{c}|}\bold{c}\cdot \bold{t}$
\If{$r_{min} > r_{max}$} \Return $\emptyset$
\ElsIf{$c_{r_{min}} = 0$} \Return 2RMAX($r_{min}+1,r_{max},\bold{c},\bold{t}$) 
\ElsIf{$c_{r_{max}} = 0$} \Return 2RMAX($r_{min},r_{max}-1,\bold{c},\bold{t}$) 
\ElsIf{$r_{min}=r_{max}$} \Return $c_{r_{min}} \delta_{(0,\dots,0,\underset{\overset{\uparrow}{r_{min}}}{1},0,\dots,0)}$
\Else \State $\gamma \gets \frac {c_{r_{min}}t_{r_{min}} + c_{r_{max}}t_{r_{max}}}{c_{r_{min}}+c_{r_{max}}}$
\If {$\gamma>\overline{{t}}$} 
\State Let $0\le \tilde{c} < c_{r_{max}}$ satisfy $\frac {c_{r_{min}}t_{r_{min}} + \tilde{c}t_{r_{max}}}{c_{r_{min}}+\tilde{c}} = \overline{t}$ 
\State $NewEntry \gets  
\{(c_{r_{min}}+\tilde{c})\delta_{(0,\dots,0, \underset{\overset{\uparrow}{r_{min}}}{\frac {c_{r_{min}}}{c_{r_{min}} + \tilde{c}}}, 0,\dots,0,\underset{\overset{\uparrow}{r_{max}}}{\frac {\tilde{c}}{c_{r_{min}} + \tilde{c}}}, 0, \dots, 0    )}\}$
\State\Return 
$NewEntry \cup 2RMAX(r_{min}+1, r_{max}, \bold{c} - (0,\dots,0,c_{r_{min}},0,\dots,0,\tilde{c},0,\dots,0))$
\ElsIf {$\gamma<\overline{{t}}$} 
\State Let $0\le \tilde{c} < c_{r_{min}}$ satisfy $\frac {\tilde{c}t_{r_{min}} + c_{r_{max}}t_{r_{max}}}{c_{r_{max}}+\tilde{c}} = \overline{t}$ 
\State $NewEntry \gets  
\{(c_{r_{max}}+\tilde{c})\delta_{(0,\dots,0, \underset{\overset{\uparrow}{r_{min}}}{\frac {\tilde{c}}{c_{r_{max}} + \tilde{c}}}, 0,\dots,0,\underset{\overset{\uparrow}{r_{max}}}{\frac {c_{r_{max}}}{c_{r_{max}} + \tilde{c}}}, 0, \dots, 0    )}\}$
\State\Return 
$NewEntry \cup 2RMAX(r_{min}, r_{max}-1, \bold{c} - (0,\dots,0,\tilde{c},0,\dots,0,c_{r_{max}},0,\dots,0))$
\ElsIf {$\gamma=\overline{{t}}$} 
\State $NewEntry \gets  
\{(c_{r_{min}}+c_{r_{max}})\delta_{(0,\dots,0, \underset{\overset{\uparrow}{r_{min}}}{\frac {c_{r_{min}}}{c_{r_{min}} + c_{r_{max}}}}, 0,\dots,0,\underset{\overset{\uparrow}{r_{max}}}{\frac {c_{r_{max}}}{c_{r_{min}}+c_{r_{max}}}}, 0, \dots, 0    )}\}$
\State\Return 
$NewEntry \cup 2RMAX(r_{min}+1, r_{max}-1, \bold{c} - (0,\dots,0,c_{r_{min}},0,\dots,0,c_{r_{max}},0,\dots,0))$

\EndIf
\EndIf
\end{algorithmic}
\end{algorithm}

\begin{example}
\begin{enumerate}
\item[i)]
Let $\mu \sim \nu = \delta_{\left(\frac 1 4,\frac 1 2,\frac 1 4\right)}$ for $\bold{t} = (10,20,30)$ and $\bold{q} = (0.25,0.5,0.25)$. The same travel time distribution $\tau = \delta_{20}$ (every driver's average travel time equal $20$) is generated by $\mu^R \sim 0.5 \delta_{(0,1,0)} + 0.5\delta_{(\frac 1 2, 0, \frac 1 2)}$, where each infinitesimal driver uses at most two routes, compare Proposition \ref{Prop_AssPlans_prop}iii. Indeed, Algorithm \ref{Alg_2rmax} returns 
$$2RMAX(1,R,(0.25,0.5,0.25), (10,20,30)) = \{\alpha^{13}\delta_{\bold{q}^{13}(\bold{q})}, \alpha^{22}\delta_{\bold{q}^{22}(\bold{q})}\},$$
where $\bold{q}^{13} = (0.5,0,0.5), \bold{q}^{22} = (0,1,0)$, $\alpha^{13}=\alpha^{22}=0.5$. Suppose now there are $4$ drivers in the system, i.e. $I = \{1,2,3,4\}$ and the original assignment plan $$\mu = \begin{bmatrix}
0.25 & 0.5 & 0.25 \\
0.25 & 0.5 & 0.25 \\
0.25 & 0.5 & 0.25 \\
0.25 & 0.5 & 0.25 
\end{bmatrix}$$
The 2RMAX routings are supported on $\tilde{i} = I \times \{(1,3), (2,2)\}$ with the measure $\tilde{di}(A) = 0.5di(A \cap I \times \{(1,3)\}) + 0.5di(A \cap I \times \{(2,2)\}$ and 
$$\mu^R((i,1,3),r) = \begin{bmatrix}
0.5 & 0 & 0.5 \\
0.5 & 0 & 0.5 \\
0.5 & 0 & 0.5 \\
0.5 & 0 & 0.5 
\end{bmatrix}, \mu^R((i,2,2),r) = \begin{bmatrix}
0 & 1 & 0 \\
0 & 1 & 0 \\
0 & 1 & 0 \\
0 & 1 & 0 
\end{bmatrix}$$
which means that every driver $i$ is further split into two parts with equal weights $0.5$ and one half is routed according to $\mu^R((i,1,3),r)$ and the other according to $\mu^R((i,2,2),r)$. 
\item[ii)]
Let $\mu \sim \delta_{(\frac 1 4,\frac 1 4,\frac 1 4,\frac 1 4)}$ for $\bold{t} = (10,20,30,40)$ and $\bold{q} = (0.25,0.25,0.25,0.25)$ and $T^{CAV} \sim \tau = \delta_{25}$. Then $\mu^R_1 \sim 0.5\delta_{(\frac 1 2,0,0,\frac 1 2)}+0.5\delta_{(0,\frac 1 2,\frac 1 2,0)}$ is one 2RMAX plan generating $\tau$, while $\mu^R_2 \sim 1/3\delta_{(\frac 1 4,0,\frac 3 4,0)} + 1/3\delta_{(0,\frac 3 4,0,\frac 1 4)} + 1/3 \delta_{(\frac 1 2,0,0,\frac 1 2})$ is another. Recombining (i.e. integrating) $\mu^R$ along the additional dimensions we can recover $\mu$. 
\end{enumerate}
\end{example}

\end{document}